\documentclass[11pt]{article}

\usepackage[margin=1in]{geometry}
\usepackage{authblk}
\usepackage{graphicx}
\usepackage{multirow}
\usepackage{amsmath,amssymb,amsfonts}
\usepackage{amsthm}
\usepackage{xcolor}
\usepackage{booktabs}
\usepackage{algorithm}
\usepackage{algpseudocode}
\usepackage{subcaption}
\usepackage{caption}
\usepackage{listings}
\usepackage{cases}
\usepackage{bm}
\usepackage{placeins}
\usepackage[colorlinks=true,linkcolor=blue,citecolor=blue,urlcolor=blue]{hyperref}

\theoremstyle{plain}
\newtheorem{theorem}{Theorem}[section]

\newtheorem{lemma}[theorem]{Lemma}
\theoremstyle{definition}
\newtheorem{example}{Example}[section]
\newtheorem{definition}{Definition}[section]
\theoremstyle{remark}
\newtheorem{remark}{Remark}[section]

\newcommand{\bem}{\begin{bmatrix}}\newcommand{\eem}{\end{bmatrix}}

\def\mcala{\mathcal A}
\def\mcalb{\mathcal B}
\def\mcali{\mathcal I}

\def\bcirc{{\rm bcirc}}
\def\unfold{{\rm unfold}}
\def\fold{{\rm fold}}

\def\ran{{\rm R}}
\def\nul{{\rm null}}

\def\mbbr{\mathbb R}

\def\fm#1{\mathcal{#1}}
\def\m#1{\boldsymbol{#1}}

\def\r{\mathbb{R}}

\def \a#1{\fm A_{:, #1, :}}
\def \b#1{\fm B_{#1, :, :}}
\def \ai#1{\fm A_{#1, :, :}}
\def \z#1#2{\fm Z^{(#1)}_{#2, :, :}}
\def \bp#1{\fm B^{\perp}_{#1, :, :}}
\def \x#1{\fm X^{(#1)}}

\title{A Tensor Greedy Double-Block Extended Kaczmarz Method for Inconsistent Tensor Linear Systems under the t-product}
\author[1]{Jérémie Mabiala}
\author[1,2]{Lionel Tondji}
\affil[1]{African Institute of Mathematical Sciences and African Master's of Machine Intelligence, Mbour, Senegal}
\affil[2]{Institute for Analysis and Algebra, TU Braunschweig, Braunschweig, Germany}
\date{}

\begin{document}

\maketitle

\begin{abstract}
The randomized extended Kaczmarz method is an effective iterative framework for solving large-scale inconsistent linear systems. In this paper, we extend this framework to third-order inconsistent tensor linear systems under the t-product and propose the Tensor Greedy Double Block Extended Kaczmarz (TGDBEK) method. At each iteration, TGDBEK dynamically constructs active blocks of row and column slices via a residual-based greedy selection strategy, prioritizing the slices associated with the largest residual norms. Unlike existing tensor block Kaczmarz variants that rely on static, predefined partitions --- the tensor randomized extended block Kaczmarz (TREBK) method, its greedy counterpart (TREGBK), and the tensor randomized extended average block Kaczmarz (TREABK) method --- TGDBEK adapts the active block sizes dynamically at each step using a single intuitive threshold parameter 
$\eta$. We establish the theoretical linear convergence of TGDBEK to the unique minimum-norm least-squares solution $\fm{A}^\dagger * \fm{B}$. Extensive numerical benchmarks on synthetic dense and sparse tensor systems, as well as multidimensional multichannel color and 3D volumetric MRI image deblurring problems, demonstrate that TGDBEK substantially outperforms state-of-the-art tensor Kaczmarz solvers in both iteration count and CPU running time.
\end{abstract}

\noindent\textbf{Keywords:} t-product, tensor linear systems, Kaczmarz method, inconsistent systems

\noindent\textbf{MSC (2020):} 65F10, 65F20, 65F22, 15A69

\section{Introduction}\label{sec:intro}

In this work, we focus on solving the tensor inconsistent linear system of the form
\begin{align}
\label{eq:eq1}
\mathcal{A} * \mathcal{X} =  \mathcal{B},\;  \fm B = \overline{\fm B} + \varepsilon
\end{align}
where $\mathcal{A}\in\mathbb{R}^{N_1\times N_2\times N_3},\;
\mathcal{B}\in\mathbb{R}^{N_1\times K\times N_3},\;
\mathcal{X}\in\mathbb{R}^{N_2\times K\times N_3},$  are third-order tensors, $\varepsilon \in \r^{N_1\times K\times N_3}$ the error tensor, and $*$ the t-product proposed by Kilmer and collaborators in \cite{kilmer2011facto,kilmer2013thirdOrder}. The t-product provides a linear algebraic paradigm for third-order tensors, which includes tensor analogues of matrix concepts like inverse of a tensor, transpose, orthogonal projections, and pseudo-inverse, etc.\ \cite{kilmer2011facto,kilmer2013thirdOrder}. Since its introduction in \cite{kilmer2011facto}, the t-product has been applied to several theoretical and practical applications. For instance, the authors in \cite{newman2018} proposed  a tensor neural network (t-NN) framework, a novel paradigm for designing deep neural networks.  We kindly refer to \cite{zhang2018,wang2020,qi2021,newman2018,miao2020gener} and to references therein for  applications of the t-product. Randomized algorithms built on this framework have proved particularly effective on imaging data: Tarzanagh and Michailidis \cite{tarzanagh2018} developed fast randomized algorithms for t-product tensor operations and decompositions, with applications to image and video recovery from incomplete and noisy data. The problem of the form \eqref{eq:eq1} arises in many practical applications, such as in color image recovery, MRI image deblurring with multiple channels, where the tensor $\fm X$ is the unknown clean tensor image to be found, $\fm A$ is a  tensor blur operator, and $\fm B$ is the observed blurred and noisy data. When the noise is relatively higher, it becomes difficult to solve this problem through direct methods. 

In the matrix setting, where $\fm A$ is a matrix, $\fm X$ and $\fm B$ are vectors, iterative methods like the randomized Kaczmarz (RK)\cite{strohmer2009}, a randomized variant of the classical Kaczmarz method \cite{kaczmarz1937angen}, can be used to solve the linear system ~\eqref{eq:eq1}. The RK method selects rows at each iteration with probability proportional to their Euclidean norm square, unlike the traditional Kaczmarz method, where rows are  selected cyclically and can result in a non-linear convergence.  In their seminal work, Strhomer et al. \cite{strohmer2009} proved that this row selection strategy makes the RK converge linearly in expectation for consistent and overdetermined linear systems. For inconsistent systems, however, RK only converges to within a fixed radius of the least-squares solution \cite{needell2010rando}. To solve inconsistent linear systems, Zouzias and Freris \cite{zouzias2013rando} extended the randomized Kaczmarz by introducing an additional iterative dual variable and proposed the randomized extended Kaczmarz (REK) which converges linearly in expectation to the minimum-norm least-squares solution $\m A^\dagger \m b$, where $\m A^\dagger$ is the Moore-Penrose pseudoinverse of the system matrix $\m A$. Explicitly, the REK method runs the RK iteration twice at each step: once on the consistent system $\m A^\top \m z = \m 0$ to update $\m z$, and once on $\m A \m x=\m b-\m z$ to update $\m x$, writing $\m z_0=\m b$,
\begin{align}
  \m z^{(k+1)} & = \m z^{(k)} - \frac{\big(\m A_{:j_k}\big)^\top\m z^{(k)}}{\|\m A_{:j_k}\|_2^2}\,\m A_{:j_k},    \label{eq:rek-z}\\
  \m x^{(k+1)} & = \m x^{(k)} + \frac{\m b_{i_k}-\m z^{(k+1)}_{i_k}-\m A_{i_k:}\m x^{(k)}}{\|\m A_{i_k:}\|_2^2}\,\big(\m A_{i_k:}\big)^\top   \label{eq:rek-x},
\end{align}
where $\m A_{i_k:}$ and $\m A_{:j_k}$ denote the $i_k$-th row and $j_k$-th column of $\m A$, $\m b_{i_k}$ and $\m z_{i_k}$ the corresponding entries of $\m b$ and of $\m z$, and the pair $(i_k,j_k)$ is drawn independently with probability proportional to $\Vert \m A_{i_k:}\Vert_2^2/\Vert \m A\Vert_F^2$ and $\Vert \m A_{:j_k}\Vert_2^2/\Vert \m A\Vert_F^2$, respectively. As $k\to\infty$, $\m z^{(k)}\to \m b_{\mathcal R(\m A)^\perp}$ and $\m x^{(k)}\to \m A^\dagger \m b$, each row and column of $\m A$ being used once at each iteration to update $\m x$ and $\m z$, respectively.

The randomized (extended) Kaczmarz methdod can be slow. Two practical ideas emerged to accelerate it. The first samples blocks of rows instead of a single row, and results in block variant methods and averaged-block variant methods\cite{du2020rando,tondji2023faster,necoara2019faste,needell2014paved,needell2015rando, moorman2021}. Averaging the block updates, as in \cite{du2020rando,tondji2023faster, necoara2019faste, moorman2021}, keeps the iteration pseudoinverse-free and amenable to parallel implementation. 
We  refer to  \cite{tondji2023accelerated, tondji2024acceleration, tondji2021linear} and an unified count of these accelerations and of their behaviour on inconsistent systems can be found in \cite{tondji2024thesis}. The second selects rows or blocks according to a greedy criterion where only those with largest residuals are chosen, resulting in greedy randomized Kaczmarz variants studied in \cite{bai2018greed,bai2019greed,bai2021greed} and in references therein.

Inconsistency can also be handled outside the extended framework, for example through Bregman projections. Sch\"opfer et al.\ \cite{schopfer2022extended} merged the extended and the sparse randomized Kaczmarz iterations and proved linear convergence to a \emph{sparse} least-squares solution, in a generalized form that also tolerates corruption in a few entries of the right-hand side. 
Tondji et al.\ \cite{tondji2024adaptive} assumed instead  an independent noisy sample is drawn at each step and showed that an adaptive stepsize, estimated from the data alone, reaches the \emph{exact} solution rather than a neighborhood of it; averaging the blocks and weighting them by their noise level accelerates this scheme further \cite{tondji2026accelerated}. 
Their mechanism for dealing with inconsistency differs from the REK method we build upon herein in this respect: they replace the Euclidean projection by a Bregman one --- and, sometimes in the block versions, the exact projection is replaced by an averaged-sum of projections onto each hyperplane composing the selected block --- and assume about the noise is redrawn each step, whereas the REK method and the method proposed below project out the part of the right-hand side lying outside the range of the operator. They are also stated for matrices only, and their t-product counterparts are still missing.

The block and greedy ideas we discussed previously have been recently brought to the t-product setting. Initially, Ma and Molitor \cite{molitor2022} proposed in their seminal work the tensor randomized Kaczmarz (TRK) method, which extends the RK method to solve tensor linear systems under the t-product. It was shown that the TRK method is equivalent to the block randomized Kaczmarz method in the Fourier domain and hence superior compared to the RK method applied to vectorized tensor systems. Subsequently, Chen and Qin \cite{chen2021} proposed the randomized regularized Kaczmarz method for tensor recovery. For applications of the randomized Kaczmarz method in tensor recovery and completion problems, please refer to \cite{chen2021,wang2022,du2021retr,wang2023,lia2024,zhang2024}. 
Castillo and collaborators \cite{castillo2025wisdm} further extend Kaczmarz- and Gauss--Seidel-type iterations to tensor regression under the t-product, with an application to image deblurring. The authors of \cite{ba02020TRB} introduced the tensor randomized block Kaczmarz (TRBK) method. To overcome inconsistency, Huang and Zhong \cite{huang2023TREK} extended REK directly to the t-product setting, resulting to the tensor randomized extended Kaczmarz (TREK) method. In fact, similar to the matrix randomized extended Kaczmarz, the tensor version TREK sets $\fm Z^{(0)}=\fm B$, and  at  each iteration $k$, draws the lateral and horizontal slice indexes $j$ and $i$  independently with probability  $\|\a{j}\|_F^2/\|\fm A\|_F^2$ and $\|\ai{i}\|_F^2/\|\fm A\|_F^2$ respectively, then updates the iterates using:
\begin{align}
  \fm Z^{(k+1)} &= \fm Z^{(k)} - \a{j}*\big((\a{j})^\top*\a{j}\big)^\dagger*(\a{j})^\top*\fm Z^{(k)}, \label{eq:trek-z}
  \\
  \fm X^{(k+1)} &= \fm X^{(k)} - (\ai{i})^\top*\big(\ai{i}*(\ai{i})^\top\big)^\dagger*\big(\ai{i}*\fm X^{(k)} - \fm B_{i,:,:} + \fm Z^{(k)}_{i,:,:}\big). \label{eq:trek-x}
\end{align}
Furthermore, in the same paper, Huang and Zhong \cite{huang2023TREK} also introduced two block variants of TREK, namely the tensor randomized extended block Kaczmarz (TREBK) and its greedy counterpart (TREGBK), which pre-partition the lateral and horizontal slices into fixed blocks rather than sampling a single slice at each step. Recently, An and collaborators \cite{lian2024TREABK} proposed tensor randomized extended average block Kaczmarz (TREABK) a pseudoinverse-free randomized extended average block Kaczmarz  which inherently uses the averaging idea appeared before in \cite{du2020rando,necoara2019faste}.

Many of these tensor block methods \cite{ba02020TRB,lian2024TREABK} we discussed previously rely primarily on static, predefined partitions of rows or columns of the tensor system, which of course can depend on the structure of the tensor system: sparse, dense, etc. In this paper, inspired by~\cite{su2025}, we apply the greedy randomized block extended Kaczmarz method's idea from matrices to tensor equations. Consequently, we propose a tensor greedy double block extended Kaczmarz (TGDBEK) method. The TGDBEK method constructs iteratively  lateral(columns) and horizontal(rows) slices at each iteration, and does not require a fixed partition of the tensor system. In fact, the TGDBEK method focuses projection computation on where the current residual is the largest. We prove the linear convergence  of the TGDBEK and demonstrate its practical effectiveness against TREK \cite{molitor2022}, TREBK \cite{huang2023TREK}, TREGBK \cite{huang2023TREK}, including applications to image deblurring and sparse tensor systems from SuiteSparse matrices \cite{suiteSparse}.

The remainder of the paper is organized as follows. In Section~\ref{sec:prelim}, we introduce the notations and definitions. In Section~\ref{sec:tdgbek}, we describe the TGDBEK algorithm and prove its linear convergence.  Numerical examples are provided in Section~\ref{sec:numexp} and finally we draw conclusions in Section~\ref{sec:concl}.

\section{Notations and preliminaries}\label{sec:prelim}

\subsection{Basic notation}

Throughout this paper, the calligraphic letters such as $\fm A, \fm B$ denote tensors. Bold capital letters represent matrices, for example, $\m A, \m B$. For an integer $m\geq 1$, we denote $[m]:=\{1, \ldots, m \}$. If  $\fm A$ is a third-order tensor, $\fm A_{ijk}$ denotes the $(i,j,k)$-th element. $\fm A_{i,:,:}$, $\fm A_{:,j,:}$, and $\fm A_{:,:,k}$ denote the horizontal, lateral, and frontal slices of $\fm A$, respectively. The tensor $\fm A$ and its three slices are illustrated in Fig.~\ref{fig:tensor-slices}. For $J\subseteq [N_1]$  we denote $\ai{J}\in \r^{\vert J\vert \times N_2\times N_3}$ a sub-tensor of $\fm A$ whose horizontal slice indexes belong to $J$. Similarly, for $U\subseteq [N_2]$ we denote $\a{U}\in \r^{N_1 \times \vert U \vert \times N_3}$ the sub-tensor $\fm A$ whose lateral slice indexes belong to $U$. The pseudo-inverse of $\fm A$ is denoted $\fm A^\dagger$ and the minimum and maximum nonzero singular values of a matrix are denoted $\sigma_{\min}^+(\cdot)$ and $\sigma_{\max}(\cdot)$, respectively. Similarly, we denote $\ran(\cdot)$ and $\ker(\cdot)$, respectively, the range and null spaces of a tensor $\fm A$ or a matrix $\m A$.

We now define the intrinsic geometric parameters that the main convergence theorem will rely on.
\subsubsection*{Definition of Intrinsic Geometric Parameters.}
Let $\Omega_r = \{ J \subseteq [N_1] : J \neq \emptyset \}$ be the collection of all non-empty row index subsets of $\mathcal{A}$. $J\in \Omega_r$ is an admissible row block.  We define the \textbf{intrinsic geometric parameters} of the tensor $\mathcal{A} \in \mathbb{R}^{N_1 \times N_2 \times N_3}$ as follows:

\begin{enumerate}
    \item \textbf{Tensor Block Condition Number:}
    \[
    \kappa_{\mathrm{block}}(\mathcal{A}) := \min_{J \in \Omega_r} \frac{\left(\sigma_{\min}^+(\mathrm{bcirc}(\mathcal{A}_{J,:,:}))\right)^2}{\sigma_{\max}^2(\mathrm{bcirc}(\mathcal{A}_{J,:,:}))} \in (0, 1]
    \]
    \textit{(measures the worst-case conditioning across all admissible  row blocks)}

    \item \textbf{Smallest Active Block Singular Value:}
    \[
    \sigma_{\min, \mathrm{block}}^+(\mathcal{A}) := \min_{J \in \Omega_r} \sigma_{\min}^+(\mathrm{bcirc}(\mathcal{A}_{J,:,:})) > 0
    \]

    \item \textbf{Subspace Error Fraction:}
    
    For any iteration $k \ge 0$, let $\theta_k$ denote the fraction of error captured by the active row range $R(\mathcal{A}_{J_k,:,:}^\top)$:
    \[
    \theta_k := \frac{\left\| \left(\mathcal{X}^{(k)} - \mathcal{X}_*\right)_{R\left((\mathcal{A}_{J_k,:,:})^\top\right)} \right\|_F^2}{\left\| \mathcal{X}^{(k)} - \mathcal{X}_* \right\|_F^2} \in (0, 1), \qquad \theta := \min_{k \ge 0} \theta_k > 0
    \]
\end{enumerate}

\begin{figure}[ht]
\centering
\centering
\begin{subfigure}[t]{0.23\linewidth}
  \centering
  \includegraphics[width=\linewidth]{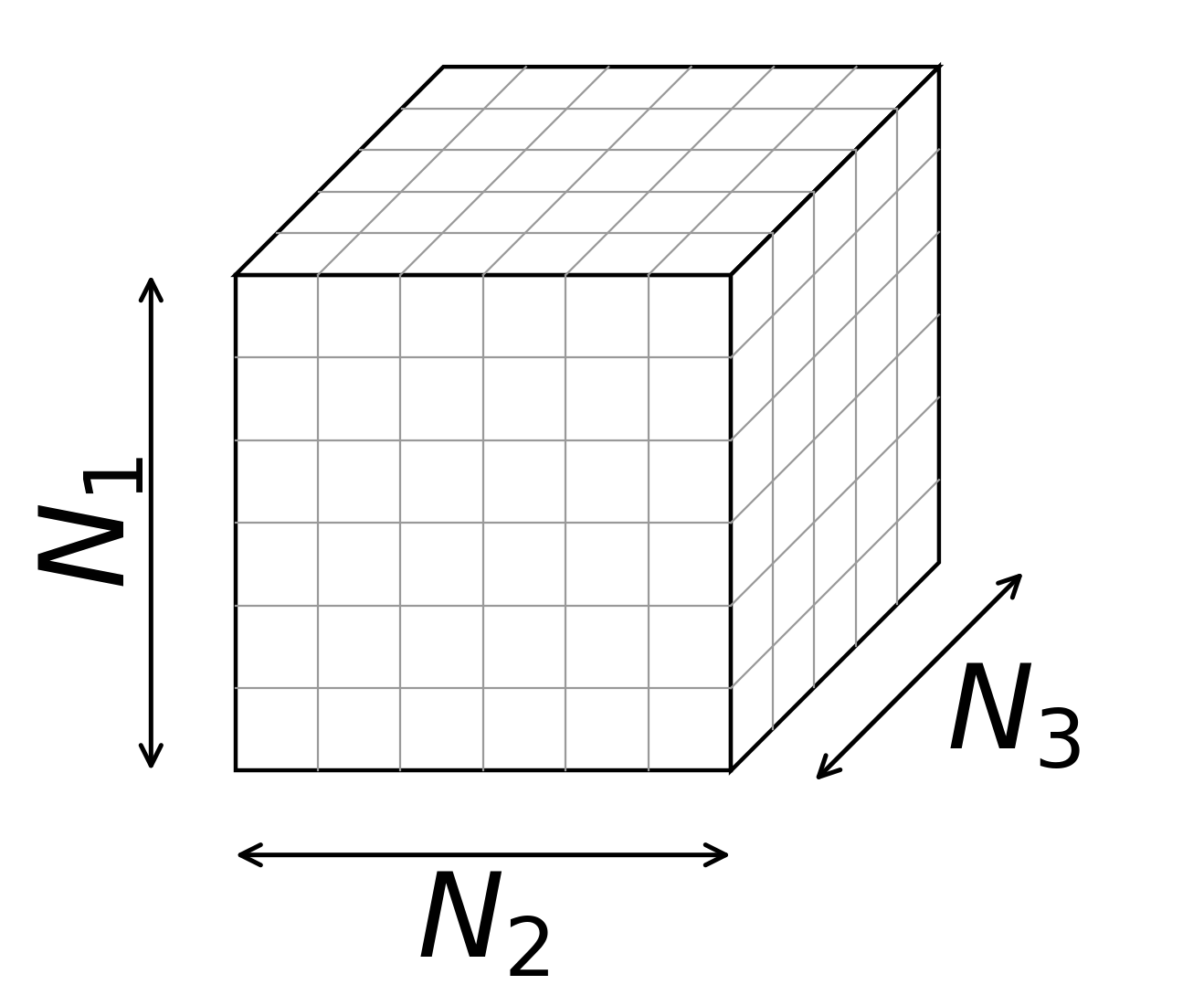}
  \caption{Tensor $\fm A$}
  \label{fig:tensor-cube}
\end{subfigure}\hfill
\begin{subfigure}[t]{0.23\linewidth}
  \centering
  \includegraphics[width=\linewidth]{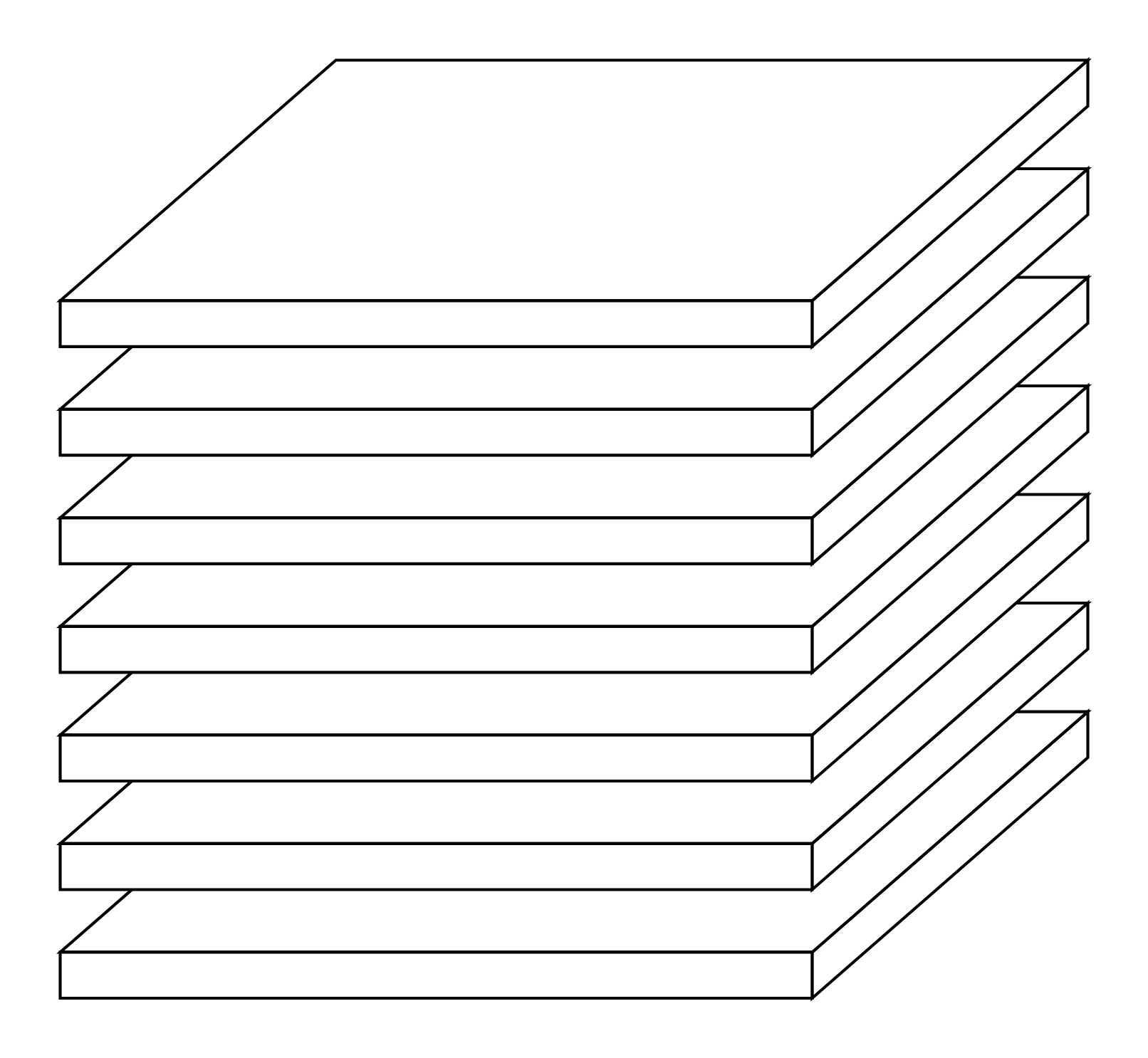}
  \caption{Horizontal slices $\fm A_{i,:,:}$}
  \label{fig:slice-horiz}
\end{subfigure}\hfill
\begin{subfigure}[t]{0.23\linewidth}
  \centering
  \includegraphics[width=\linewidth]{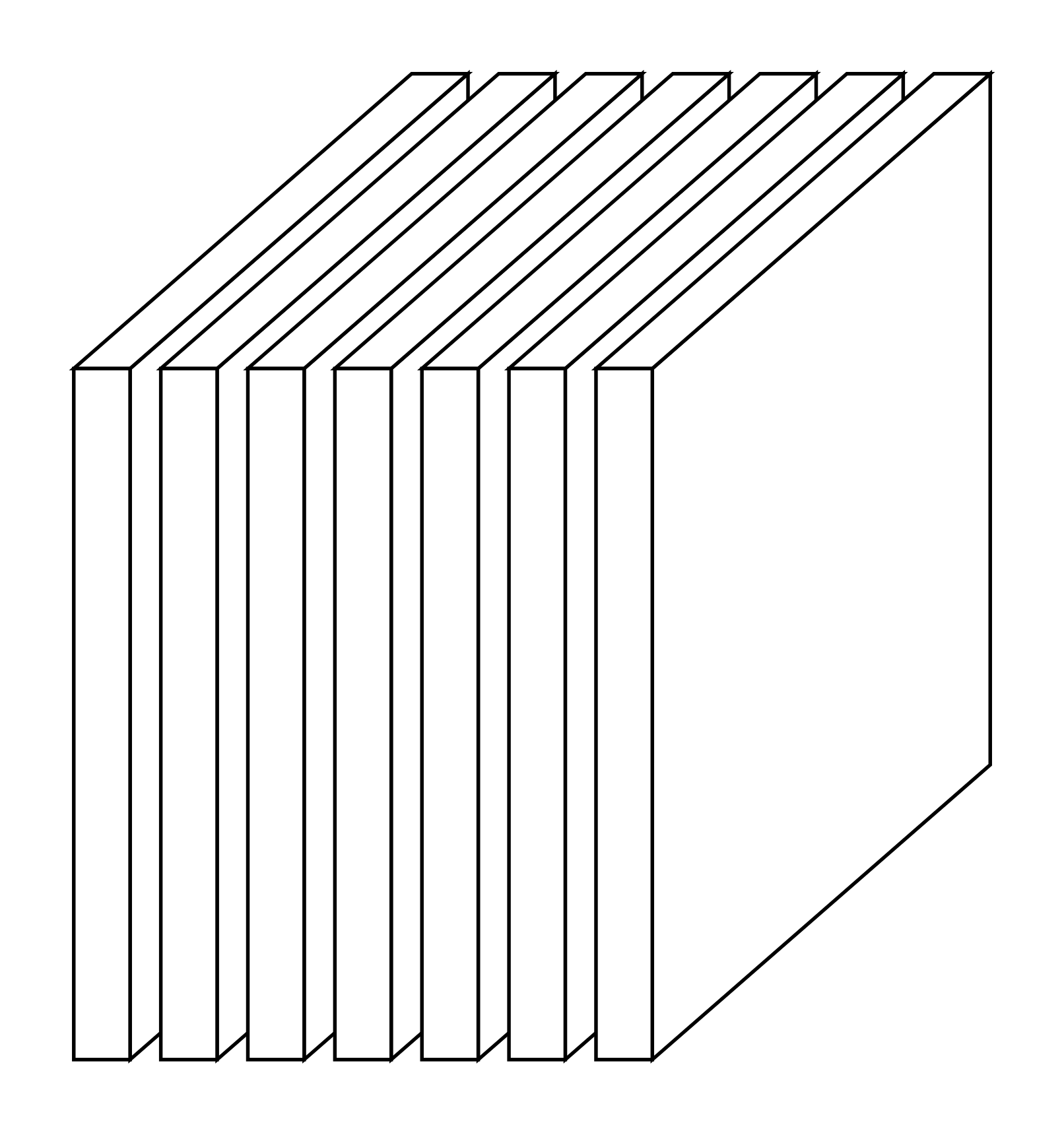}
  \caption{Lateral slices $\fm A_{:,j,:}$}
  \label{fig:slice-lat}
\end{subfigure}\hfill
\begin{subfigure}[t]{0.23\linewidth}
  \centering
  \includegraphics[width=\linewidth]{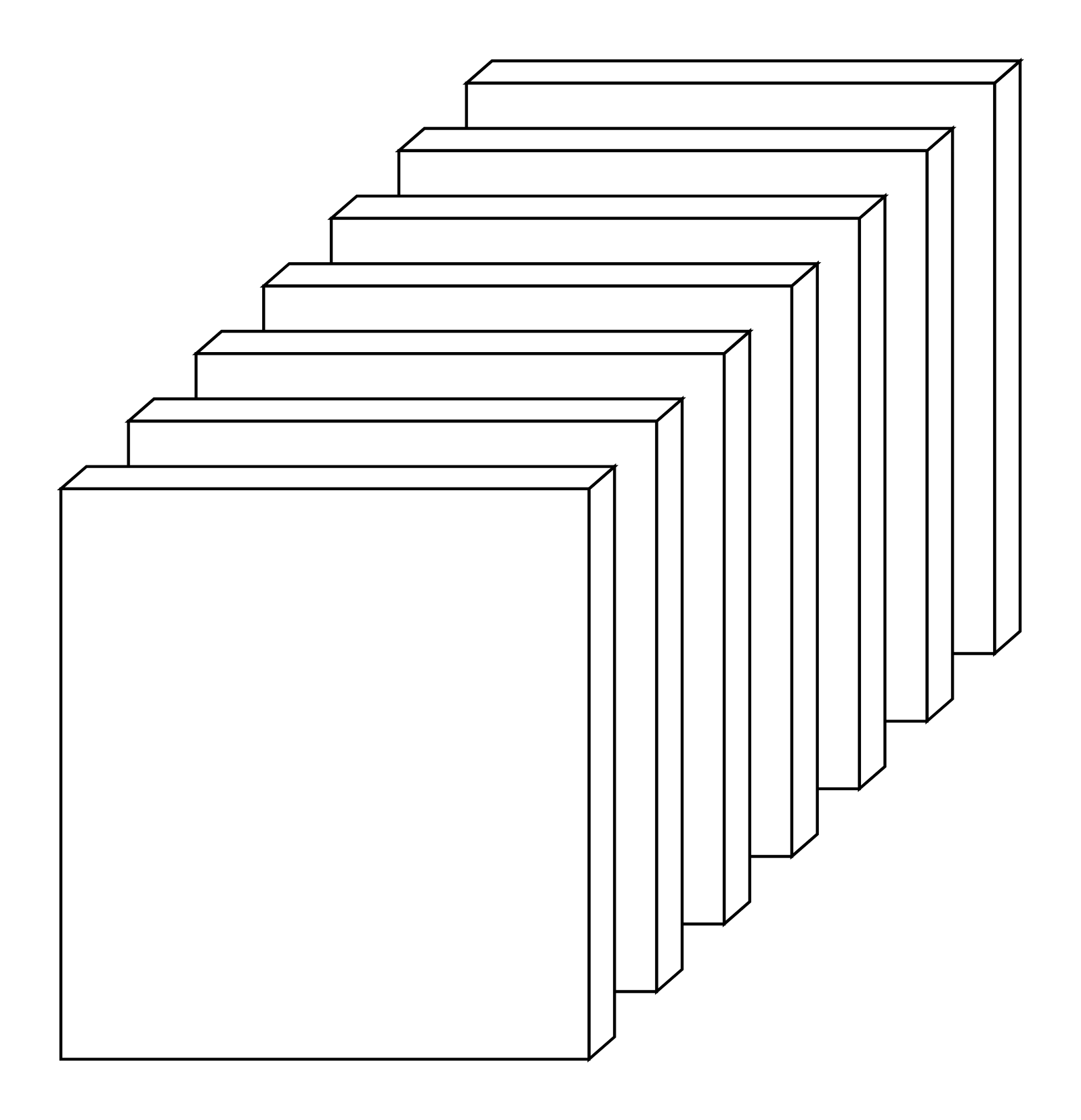}
  \caption{Frontal slices $\fm A_{:,:,k}$}
  \label{fig:slice-front}
\end{subfigure}
\caption{A third-order tensor $\fm A\in\mathbb{R}^{N_1\times N_2\times N_3}$ and its three  slices : horizontal (row), lateral (column), and frontal.}
\label{fig:tensor-slices}
\end{figure}

\subsection{Tensor preliminaries}
We follow Kilmer and Martin \cite{kilmer2011facto,kilmer2013thirdOrder} to provide the background for the t-product linear algebra.\\

Given a third-order tensor $\fm A\in \r^{N_1\times N_2\times N_3}$, we can unfold $\fm A$ into a matrix, and inversely fold the result back. The operation \textit{unfold} and its inverse \textit{fold} are given as
\begin{align*}
\unfold(\mcala):=\bem\fm A_{:,:,1}\\ \fm A_{:,:,2}\\ \vdots\\ \fm A_{:,:,N_3}\eem\in\mbbr^{N_1N_3\times N_2}, \qquad \fold\left(\bem\fm A_{:,:,1}\\ \fm A_{:,:,2}\\ \vdots\\ \fm A_{:,:,N_3}\eem\right):=\mcala.
\end{align*}

\noindent
The block circulant matrix of $\fm A$ is given as
\begin{align*}
    \bcirc(\mcala):=\bem\fm A_{:,:,1} &\fm A_{:,:,N_3} & \cdots & \fm A_{:,:,2}\\ \fm A_{:,:,2} & \fm A_{:,:,1} & \cdots &\fm A_{:,:,3}\\ \vdots &\vdots & \ddots &\vdots\\ \fm A_{:,:,N_3}& \fm A_{:,:,N_3-1}& \cdots &\fm A_{:,:,1}\eem\in\mbbr^{N_1N_3\times N_2N_3}.
\end{align*}

\begin{definition}[t-product \cite{kilmer2011facto,kilmer2013thirdOrder}]
For $\mcala\in\mbbr^{N_1\times N_2\times N_3}$ and $\mcalb\in\mbbr^{N_2\times K\times N_3}$, the t-product $\mcala*\mcalb$ is the $N_1\times K\times N_3$ tensor defined by
$$
\mcala*\mcalb:=\fold(\bcirc(\mcala)\unfold(\mcalb)).
$$
\end{definition}

\begin{definition}[\cite{kilmer2011facto}]
The identity tensor $\mcali\in\mbbr^{N\times N\times N_3}$ is the tensor whose first frontal slice is the $N\times N$ identity matrix, and whose other frontal slices are all zeros. The identity tensor satisfies $\fm A * \mcali = \mcali * \fm A = \fm A$ for all tensors $\fm A$ of compatible dimensions.
\end{definition}

\begin{definition}[\cite{kilmer2011facto,kilmer2013thirdOrder}]
The transpose of $\mcala\in\mbbr^{N_1\times N_2\times N_3}$, denoted $\mcala^\top$, is the $N_2\times N_1\times N_3$ tensor obtained by transposing each frontal slice and then reversing the order of transposed frontal slices $2$ through $N_3$.
\end{definition}

\begin{definition}[\cite{kilmer2011facto}]
A tensor $\fm A\in \r^{N_1\times N_2\times N_3}$ defines a linear map $\fm X \mapsto \fm A * \fm X$ from $\r^{N_2\times K\times N_3}$ to $\r^{N_1\times K\times N_3}$. Its range and null spaces are defined as
\begin{align*}
    \ran(\fm A) := \bigl\{\,\fm A * \fm Y \;:\; \fm Y\in \r^{N_2\times K\times N_3}\bigr\},\; \nul(\fm A) := \bigl\{\,\fm X \in \r^{N_2\times K\times N_3} \;:\; \fm A * \fm X = \fm O\,\bigr\}.
\end{align*}
\end{definition}

The following properties will be useful in the sequel.

\noindent
\textbf{Fact 1.} \cite{molitor2022} For $\fm A\in \r^{N_1\times N_2\times N_3}$ and $\fm B\in \r^{N_2\times K \times N_3}$, the block circulant operator satisfies:
\begin{align*}
    \bcirc(\fm A^\top) &= \bcirc(\fm A)^\top,\;
    \bcirc(\fm A * \fm B) = \bcirc(\fm A)\,\bcirc(\fm B).
\end{align*}
As an immediate consequence: $(\fm A * \fm B)^\top = \fm B^\top * \fm A^\top$.

\begin{definition}[\cite{du2021retr, jin2017generalized}]\label{def:pseudoinverse}
For $\fm A\in \r^{N_1\times N_2\times N_3}$, its Moore--Penrose pseudoinverse, denoted
$\fm A^\dagger$, is the unique $N_2\times N_1\times N_3$ tensor that  satisfies
$\bcirc(\fm A^\dagger) = \bcirc(\fm A)^\dagger$.
\end{definition}


\noindent
\textbf{Fact 2.} \cite[Theorem~3.1]{jin2017generalized} For $\fm A\in \r^{N_1\times N_2\times N_3}$,
\begin{align*}\label{eq:penrose}
    \fm A * \fm A^\dagger * \fm A = \fm A, \;
    \fm A^\dagger * \fm A * \fm A^\dagger = \fm A^\dagger, \;
    \left(\fm A * \fm A^\dagger\right)^\top = \fm A * \fm A^\dagger, \;
    \left(\fm A^\dagger * \fm A\right)^\top = \fm A^\dagger * \fm A.
\end{align*}
These are the tensor analogues of the matrix Penrose conditions; they follow from the matrix Penrose conditions applied to $\bcirc(\fm A)$, together with Definition~\ref{def:pseudoinverse}, Fact~1, and the injectivity of $\bcirc(\cdot)$.\\

\noindent
\textbf{Fact 3.} For $\fm A\in \r^{N_1\times N_2\times N_3}$, the following holds:
\begin{align*}
(\fm A ^\dagger)^\top = (\fm A^\top)^\dagger, \;\; \fm A^\top * \fm A * \fm A^\dagger = \fm A^\top.
\end{align*}
\begin{proof}
The first assertion holds from Fact~1 and  the Definition~\ref{def:pseudoinverse}. For the second, it suffices to use  the Fact~2 and the first assertion to obtain  
\begin{align*}
    \fm A^\top * \fm A * \fm A ^\dagger = \fm A^\top * (\fm A * \fm A^\dagger)^\top = \fm A^\top *(\fm A^\dagger)^\top * \fm A^\top = \fm A^\top 
\end{align*}
\end{proof}

\begin{definition}[\cite{kilmer2011facto}]\label{orth-proj-def}
A tensor $\fm P$ is a projector if $\fm P * \fm P = \fm P$, and it is orthogonal if $\fm P^\top = \fm P$.
\end{definition}

\begin{lemma}[\cite{kilmer2011facto}]\label{lem:lem-proj}
For $\fm A\in \r^{N_1\times N_2 \times N_3}$, the tensors $\fm P=\fm A * \fm A^\dagger$ and $\fm Q = \fm A^\dagger * \fm A$ are orthogonal projection tensors.
\end{lemma}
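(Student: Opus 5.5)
The plan is to verify the two defining properties of Definition~\ref{orth-proj-def}, idempotence and symmetry, directly from the tensor Penrose identities in Fact~2. No passage through the block circulant representation is needed beyond what is already built into Facts~1 and~2. Both tensors are square in the appropriate sense, $\fm P\in\r^{N_1\times N_1\times N_3}$ and $\fm Q\in\r^{N_2\times N_2\times N_3}$, so $\fm P*\fm P$ and $\fm Q*\fm Q$ are well defined. The t-product is associative, since $\bcirc$ is multiplicative by Fact~1 and injective, so products may be regrouped freely.

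\textbf{Idempotence.} For $\fm P$, regroup and apply the first Penrose identity:
\begin{align*}
\fm P*\fm P=(\fm A*\fm A^\dagger*\fm A)*\fm A^\dagger=\fm A*\fm A^\dagger=\fm P .
\end{align*}
For $\fm Q$, use the second identity $\fm A^\dagger*\fm A*\fm A^\dagger=\fm A^\dagger$ in the same way:
\begin{align*}
\fm Q*\fm Q=(\fm A^\dagger*\fm A*\fm A^\dagger)*\fm A=\fm A^\dagger*\fm A=\fm Q .
\end{align*}

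\textbf{Symmetry.} This is exactly the content of the third and fourth identities in Fact~2, namely $(\fm A*\fm A^\dagger)^\top=\fm A*\fm A^\dagger$ and $(\fm A^\dagger*\fm A)^\top=\fm A^\dagger*\fm A$. Hence $\fm P^\top=\fm P$ and $\fm Q^\top=\fm Q$, so both are orthogonal projectors.

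As an alternative route, or a sanity check, one can pass to $\bcirc$. Here $\bcirc(\fm P)=\bcirc(\fm A)\bcirc(\fm A)^\dagger$ is the matrix orthogonal projector onto the range of $\bcirc(\fm A)$. Idempotence and symmetry then transfer back to $\fm P$ via Fact~1 and the injectivity of $\bcirc$, and the same argument applies to $\fm Q$.

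There is no genuine obstacle. The only point requiring care is the regrouping of triple t-products, and associativity follows from $\bcirc(\fm A*\fm B)=\bcirc(\fm A)\bcirc(\fm B)$ together with the injectivity of $\bcirc$.
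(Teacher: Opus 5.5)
Your proof is correct and takes the same approach as the paper, which only remarks that the lemma follows directly from Facts~1 and~2 together with Definition~\ref{orth-proj-def}. Your proposal spells that remark out: the first two Penrose identities give idempotence, the last two give symmetry, and Fact~1 plus the injectivity of $\bcirc$ justifies regrouping the triple t-products.
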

The proof of this Lemma follows directly from Facts 1 \& 2, and the Definition ~ \ref{orth-proj-def}.\\

Since Lemma~\ref{lem:lem-proj} holds for any tensor $\fm A\in\r^{N_1\times N_2\times N_3}$,
it applies in particular to the sub-tensors used in the TGDBEK algorithm in Section ~\ref{sec:tdgbek}. More precisely, for any index set $U_k\subseteq[N_2]$, the tensor $\left(\fm A^\top_{:,U_k,:}\right)^\dagger * \fm A^\top_{:,U_k,:}$ is an orthogonal projector. Similarly, for any index set $J_k\subseteq[N_1]$, the tensor
$\fm A_{J_k,:,:} * \left(\fm A_{J_k,:,:}\right)^\dagger$ is also an orthogonal projector.\\


\begin{definition}[\cite{kilmer2011facto}]  For $\fm A \in \r^{N_1 \times N_2 \times N_3}$, its spectral norm and the Frobenius norm are defined as follows
\begin{align}
    \Vert \fm A \Vert_2  := \Vert \bcirc (\fm A ) \Vert_2, \qquad 
    \Vert \fm A \Vert_F := \sqrt{\langle \fm A , \fm A \rangle} = \sqrt{\sum_{i,j,k}\fm A_{ijk}^2}
\end{align}
\end{definition}
From this definition, we see that $\sigma_{\max}(\fm A):= \sigma_{\max}(\bcirc(\fm A))$ and  similarly, ${\sigma_{\min}^+}(\fm A) := {\sigma_{\min}^{+}}(\bcirc(\fm A ))$.  The t-inner product is the sum of all $A_{ijk}$ squared. The relation between $\Vert \bcirc(\fm A)\Vert_F$ and $\Vert \fm A\Vert_F$ is given as 
\begin{align}
    \Vert \bcirc(\fm A ) \Vert_F = \sqrt{N_3} \Vert \fm A \Vert_F \label{eq:bcirc-frob}
\end{align}

\begin{lemma}[\cite{chen2021,du2021retr}]\label{lem:lem-12}
For $\fm A\in \r^{N_1\times N_2\times N_3}$ and any $\fm X\in \r^{N_2\times K\times N_3}$, we have
\begin{align}
    \Vert \fm A * \fm X \Vert_F^2 \leq \sigma_{\max}^2\!\left(\bcirc(\fm A)\right)\Vert \fm X \Vert_F^2. \label{eq:sigma-max-bound}
\end{align}
Moreover, for $\fm X \in \ran(\fm A^\top) = \nul(\fm A)^{\perp}$ 
\begin{align}
    \Vert \fm A * \fm X \Vert_F^2 \geq {\sigma_{\min}^+}^2\!\left(\bcirc(\fm A)\right)\Vert \fm X \Vert_F^2. \label{eq:sigma-min-bound}
\end{align}
\end{lemma}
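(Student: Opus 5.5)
The plan is to move the whole question to the matrix level through the block circulant representation and the vectorization implicit in $\unfold$. By the definition of the t-product, $\unfold(\fm A * \fm X) = \bcirc(\fm A)\,\unfold(\fm X)$. Moreover $\unfold(\cdot)$ only rearranges entries, so $\Vert \fm Y\Vert_F = \Vert \unfold(\fm Y)\Vert_F$ for every tensor $\fm Y$. Write $\m M := \bcirc(\fm A)\in\r^{N_1N_3\times N_2N_3}$ and $\m X := \unfold(\fm X)\in \r^{N_2N_3\times K}$. Then $\Vert \fm A * \fm X\Vert_F^2 = \Vert \m M \m X\Vert_F^2 = \sum_{c=1}^{K}\Vert \m M \m x_c\Vert_2^2$, where $\m x_c$ denotes the $c$-th column of $\m X$.

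For \eqref{eq:sigma-max-bound}, apply the matrix bound $\Vert \m M\m x_c\Vert_2 \le \sigma_{\max}(\m M)\Vert \m x_c\Vert_2$ to each column and sum over $c$. This gives $\sigma_{\max}^2(\m M)\Vert \m X\Vert_F^2 = \sigma_{\max}^2(\bcirc(\fm A))\Vert \fm X\Vert_F^2$, which is the claim.

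For \eqref{eq:sigma-min-bound}, first translate the hypothesis $\fm X\in\ran(\fm A^\top)$. Such an $\fm X$ has the form $\fm X = \fm A^\top * \fm Y$, so $\m X = \bcirc(\fm A^\top)\unfold(\fm Y)$. By Fact~1, $\bcirc(\fm A^\top) = \m M^\top$, so $\m X = \m M^\top \unfold(\fm Y)$. Hence every column $\m x_c$ lies in $\ran(\m M^\top) = \nul(\m M)^\perp$.

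The matrix SVD $\m M = \m U\m\Sigma\m V^\top$ now gives the lower bound. A vector in $\ran(\m M^\top)$ is a combination of right singular vectors with nonzero singular values, so $\Vert \m M\m x_c\Vert_2^2 \ge (\sigma_{\min}^+(\m M))^2\Vert \m x_c\Vert_2^2$. Summing over $c$ and converting back to tensor norms gives \eqref{eq:sigma-min-bound}.

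The one point needing care is the stated identification $\ran(\fm A^\top) = \nul(\fm A)^\perp$ in the tensor sense. It holds because the Frobenius inner product of tensors equals the Euclidean inner product of their unfoldings. Also, $\fm A * \fm X = \fm O$ if and only if $\m M\m X = \m 0$, so $\nul(\fm A)$ corresponds to the matrices whose columns lie in $\nul(\m M)$. Taking orthogonal complements columnwise, $\nul(\fm A)^\perp$ corresponds to the matrices whose columns lie in $\ran(\m M^\top)$.

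It remains to check that every matrix with columns in $\ran(\m M^\top)$ arises from some $\fm Y$. Given such a matrix $\m X$, put $\m W := (\m M^\top)^\dagger \m X$, so that $\m M^\top \m W = \m X$. The matrix $\m W$ has the same shape as $\unfold(\fm Y)$, and $\fold(\m W)$ is a valid tensor. So this direction goes through, and no structured preimage is needed.

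I expect no real obstacle. The only subtle step is this range identification, and the tensor-to-matrix correspondence settles it.
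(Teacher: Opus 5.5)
Your proof is correct and complete. The identity $\unfold(\fm A*\fm X)=\bcirc(\fm A)\,\unfold(\fm X)$, with $\unfold$ an isometric bijection, reduces both inequalities to columnwise matrix singular-value bounds. Your check (via Fact~1) that both $\ran(\fm A^\top)$ and $\nul(\fm A)^\perp$ correspond exactly to the matrices whose columns lie in $\ran(\bcirc(\fm A)^\top)$ settles the hypothesis of the lower bound.

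The paper gives no proof of this lemma and only cites it from the literature. Your reduction to $\bcirc(\fm A)$ is the natural direct argument for it.
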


\begin{lemma}[Tensor Pythagorean theorem]\label{lem:app-pythag}
Let $\fm P\in\r^{N\times N\times N_3}$ be a t-orthogonal projector. Then for any tensor $\fm X \in \r^{N\times K\times N_3}$ we have,
\begin{align}
    \Vert \fm X \Vert_F^2 = \Vert \fm P * \fm X \Vert_F^2 + \Vert (\mcali - \fm P) * \fm X \Vert_F^2. \label{thm-pythagore}
\end{align}
\end{lemma}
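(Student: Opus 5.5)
The plan is to reduce the tensor identity to the matrix Pythagorean theorem via the unfolding. The definition of the t-product gives $\unfold(\fm P * \fm X) = \bcirc(\fm P)\,\unfold(\fm X)$. Unfolding is just a rearrangement of entries, so it is an isometry for the Frobenius norm: $\Vert \fm Y\Vert_F = \Vert \unfold(\fm Y)\Vert_F$ for every tensor $\fm Y$. Hence, with $\m P := \bcirc(\fm P)$ and $\m X := \unfold(\fm X)$, it suffices to prove
\[
\Vert \m X\Vert_F^2 = \Vert \m P\m X\Vert_F^2 + \Vert (\m I - \m P)\m X\Vert_F^2 .
\]
Here I use that $\bcirc(\mcali)$ is the identity matrix of size $NN_3$ and that $\bcirc$ is linear, so $\bcirc(\mcali - \fm P) = \m I - \m P$.

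Next I transfer the projector properties to $\m P$. By Fact~1, $\m P^2 = \bcirc(\fm P * \fm P) = \bcirc(\fm P) = \m P$ and $\m P^\top = \bcirc(\fm P^\top) = \bcirc(\fm P) = \m P$. Thus $\m P$ is a symmetric idempotent matrix, that is, an orthogonal projector in the matrix sense.

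Finally, I expand $\Vert \m X\Vert_F^2 = \Vert \m P\m X + (\m I-\m P)\m X\Vert_F^2$. The cross term is $2\,\mathrm{tr}\big(\m X^\top \m P^\top(\m I-\m P)\m X\big) = 2\,\mathrm{tr}\big(\m X^\top(\m P-\m P^2)\m X\big) = 0$, which gives the identity.

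There is no serious obstacle. The only points needing care are the isometry of $\unfold$ (used instead of the $\sqrt{N_3}$ scaling in \eqref{eq:bcirc-frob}, which concerns $\bcirc$ rather than $\unfold$) and the identification $\bcirc(\mcali)=\m I$.
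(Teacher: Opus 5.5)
Your proof is correct and uses the same argument as the paper. Both write $\fm X = \fm P*\fm X + (\mcali-\fm P)*\fm X$ and kill the cross term via $\fm P^\top*(\mcali-\fm P) = \fm P - \fm P*\fm P = \fm O$. The difference is that you work with $\bcirc(\fm P)$ and $\unfold(\fm X)$, which makes explicit a step the paper leaves implicit: that the Frobenius inner product is compatible with the t-transpose, so orthogonality of the ranges really gives $\langle \fm P*\fm X, (\mcali-\fm P)*\fm X\rangle = 0$.
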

\begin{proof}
Since $\fm P$ is a t-orthogonal projector, $(\mcali - \fm P)$ is also a t-orthogonal projector with $\fm P * (\mcali - \fm P) = \fm O$, so their ranges are orthogonal. Therefore $\langle \fm P * \fm X,\; (\mcali - \fm P)*\fm X \rangle = 0$, and the identity follows from $\fm X = \fm P*\fm X + (\mcali-\fm P)*\fm X$.
\end{proof}

\begin{lemma}\label{lem:app-range-facts}
Let $\fm A\in\r^{N_1\times N_2\times N_3}$ and $\fm X_\ast = \fm A^\dagger * \fm B$. Then:
\begin{enumerate}
    \item $\ran(\fm A^\dagger) = \ran(\fm A^\top)$ and $\ran((\fm A^\dagger)^\top) = \ran(\fm A)$
    \item $\fm X_\ast\in \ran(\fm A^\top)$ is the unique minimum-norm least-squares solution to $\fm A * \fm X = \fm B$
    \item $\ran(\fm A^\top)\cap\nul(\fm A) = \{\fm O\} $
\end{enumerate}
\end{lemma}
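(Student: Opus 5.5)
The plan is to move all three assertions to the matrix level through $\bcirc(\cdot)$, prove them there, and transport them back. I will use that $\bcirc$ is injective and linear, that it turns t-products and transposes into matrix products and transposes (Fact~1), and that $\bcirc(\fm A^\dagger)=\bcirc(\fm A)^\dagger$ (Definition~\ref{def:pseudoinverse}). I will also use the identity $\bcirc(\fm A*\fm Y)=\bcirc(\fm A)\bcirc(\fm Y)$ with $\unfold(\fm A*\fm Y)=\bcirc(\fm A)\unfold(\fm Y)$. Under this identification, range and null-space membership for tensors become the corresponding statements for $\bcirc(\fm A)$ acting on $\unfold(\cdot)$. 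The inner product is preserved up to the factor in \eqref{eq:bcirc-frob}, since $\langle \fm X,\fm Y\rangle=\langle \unfold\fm X,\unfold\fm Y\rangle$.

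For item 1, I work directly with the Penrose identities of Fact~2, which avoids column-space bookkeeping. Since $\fm A^\dagger=\fm A^\dagger*\fm A*\fm A^\dagger=(\fm A^\dagger*\fm A)^\top*\fm A^\dagger=\fm A^\top*(\fm A^\dagger)^\top*\fm A^\dagger$, we get $\fm A^\dagger*\fm Y=\fm A^\top*\big((\fm A^\dagger)^\top*\fm A^\dagger*\fm Y\big)$, so $\ran(\fm A^\dagger)\subseteq\ran(\fm A^\top)$. Conversely, Fact~3 gives $\fm A^\top=\fm A^\top*\fm A*\fm A^\dagger$, and transposing Fact~2 gives $\fm A^\top=\fm A^\dagger*\fm A*\fm A^\top$. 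Hence $\fm A^\top*\fm Y=\fm A^\dagger*(\fm A*\fm A^\top*\fm Y)$, which yields the reverse inclusion. The second equality follows by applying the first to $\fm A^\top$ and using $(\fm A^\top)^\dagger=(\fm A^\dagger)^\top$ from Fact~3.

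For item 3, take $\fm X=\fm A^\top*\fm Y$ with $\fm A*\fm X=\fm O$. Then $\|\fm X\|_F^2=\langle \fm A^\top*\fm Y,\fm X\rangle=\langle \fm Y,\fm A*\fm X\rangle=0$. This needs the adjoint identity $\langle \fm A^\top*\fm Y,\fm X\rangle=\langle\fm Y,\fm A*\fm X\rangle$, which I will verify via unfold. Because $\unfold(\fm A*\fm X)=\bcirc(\fm A)\unfold(\fm X)$ and $\bcirc(\fm A^\top)=\bcirc(\fm A)^\top$, the identity reduces to the matrix adjoint relation. The same computation shows $\nul(\fm A)=\ran(\fm A^\top)^\perp$, which I will reuse below.

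For item 2, $\fm X_*=\fm A^\dagger*\fm B\in\ran(\fm A^\dagger)=\ran(\fm A^\top)$ by item 1. For any $\fm X$, write $\fm A*\fm X-\fm B=(\fm A*\fm X-\fm A*\fm X_*)+(\fm P*\fm B-\fm B)$ with $\fm P=\fm A*\fm A^\dagger$. The first term lies in $\ran(\fm A)$. The second lies in $\ran(\mcali-\fm P)$, which is orthogonal to $\ran(\fm A)$ because $\fm P$ is an orthogonal projector (Lemma~\ref{lem:lem-proj}) and $\fm P*\fm A=\fm A$. Lemma~\ref{lem:app-pythag} then gives $\|\fm A*\fm X-\fm B\|_F^2=\|\fm A*(\fm X-\fm X_*)\|_F^2+\|(\mcali-\fm P)*\fm B\|_F^2$. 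So $\fm X_*$ is a least-squares solution, and the least-squares solutions are exactly $\fm X_*+\nul(\fm A)$.

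Since $\fm X_*\in\ran(\fm A^\top)=\nul(\fm A)^\perp$, Pythagoras again shows that $\fm X_*$ has strictly minimal norm among these solutions, which gives uniqueness. Equivalently, two minimum-norm solutions would differ by an element of $\ran(\fm A^\top)\cap\nul(\fm A)=\{\fm O\}$.

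The main obstacle is the adjoint identity and the resulting orthogonality $\nul(\fm A)=\ran(\fm A^\top)^\perp$. Everything else is algebra with Facts~1--3, so I will state the unfold/bcirc inner-product correspondence explicitly before using it.
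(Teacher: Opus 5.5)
The paper gives no proof of this lemma. It is stated without justification right after Lemma~\ref{lem:app-pythag}, presumably on the understanding that, like Fact~2, it follows by transporting the matrix facts for $\bcirc(\fm A)$ through $\bcirc$ and $\unfold$. There is therefore nothing to match step by step, and your argument is correct and complete on its own.

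It is not really the transfer argument your first paragraph announces. Apart from the adjoint identity $\langle \fm A^\top*\fm Y,\fm X\rangle=\langle \fm Y,\fm A*\fm X\rangle$, which you check through $\unfold$, everything is done intrinsically with Facts~2--3. For item~1, only $\fm A^\dagger=\fm A^\top*(\fm A^\dagger)^\top*\fm A^\dagger$ and $\fm A^\top=\fm A^\dagger*\fm A*\fm A^\top$ are needed; the identity $\fm A^\top=\fm A^\top*\fm A*\fm A^\dagger$ you also cite plays no role.

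Compared with a pure $\bcirc$ transfer, your route avoids identifying $\unfold(\ran(\fm A))$ with the set of $N_1N_3\times K$ matrices whose columns lie in the column space of $\bcirc(\fm A)$. The transfer route needs that identification but does not state it. The price is a few more lines of Penrose algebra.

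A real bonus of your argument is the byproduct $\nul(\fm A)=\ran(\fm A^\top)^\perp$. It is stronger than item~3, and it is what the paper actually uses later: in Lemma~\ref{lem:lem-12}, and in the proof of Theorem~\ref{thm:main}, where $\ran((\fm A_{J_k,:,:})^\top)^\perp=\nul(\fm A_{J_k,:,:})$ is attributed to this lemma. The lemma as stated only asserts a trivial intersection.

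The one loose phrase is the ``equivalently'' at the end of item~2. Saying that two minimum-norm solutions differ by an element of $\ran(\fm A^\top)$ presupposes that every minimum-norm solution lies in $\ran(\fm A^\top)$. That is exactly what your Pythagorean argument establishes, so the remark restates that argument rather than giving an independent proof. This does no harm, since the Pythagorean argument already gives uniqueness.
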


\section{The TGDBEK algorithm}\label{sec:tdgbek}

In this section, we construct the tensor greedy double extended Kaczmarz, as illustrated in Algorithm~\ref{alg:alg-1}. We assume the system \eqref{eq:eq1} is inconsistent, thus $\fm B \notin \ran(\fm A)$. From the orthogonal decomposition  $\ran(\fm A) \perp \nul(\fm A^\top)$ there must exist $\fm Z \in \nul(\fm A^\top)$ such that  $\fm B = (\fm B - \fm Z) + \fm Z$ and
$\fm B - \fm Z \in \ran(\fm A)$, in order the system  $\fm A * \fm X = \fm B - \fm Z$ be consistent. Solving \eqref{eq:eq1} is thus equivalent to finding $\fm Z$ and $\fm X$
that satisfy 
\begin{align*}
  \fm A^\top * \fm Z = \fm O \qquad \text{and} \qquad \fm A * \fm X = \fm B - \fm Z,
\end{align*}
which obviously is a tensor analogue of the extended Kaczmarz decomposition of Zouzias and
Freris~\cite{zouzias2013rando}. The resulting $\fm X$ is the minimum-norm
least-squares solution $\fm X_\ast = \fm A^\dagger * \fm B$ of eq.~\eqref{eq:eq1}.

The TREK Algorithm (eqs.~\eqref{eq:trek-z}--\eqref{eq:trek-x})  solves the two problems above by applying the tensor analogue of RK  twice  at each iteration: once on $\fm A^\top * \fm Z = \fm O$, the iterate  $\fm Z^{(k)}$ is updated along a single, randomly drawn lateral slice index, and once on
$\fm A * \fm X = \fm B - \fm Z$, and the iterate $\fm X^{(k)}$ is updated along a single, randomly drawn horizontal slice index. Each slice of $\fm A$ is thus used once at each iteration to update $\fm Z$ and $\fm X$, exactly as in REK style ~\eqref{eq:rek-z}--\eqref{eq:rek-x}. 
Following Sun and Qin~\cite{su2025}, we can construct blocks of lateral(column) and horizontal(row) slice indexes using a greedy strategy, instead of simply sampling individual slices. The greedy  deterministically selects, at the $k$-th iteration, a block of lateral slice set $U_k$ and a block of horizontal slice set  $J_k$ of $\fm A$, then update the $\fm X$ and $Z$ iterates slightly as in ~\eqref{eq:trek-z}--\eqref{eq:trek-x}. In fact, the current iterates are orthogonally projected onto the solution spaces of $\fm A_{:,U_k,:}^\top * \fm Z = \fm O$ and
$\fm A_{J_k,:,:} * \fm X^{(k)} = \fm B_{J_k,:,:} - \fm Z^{(k+1)}_{J_k,:,:}$. The sets $U_k$ and $J_k$ are non-empty by design and are given in the Algorithm~\ref{alg:alg-1} below. By selecting the threshold $\eta$, we control the sizes of $U_k$ and $J_k$. Precisely, the objective here is to identify the largest entries of the residual tensors $-\fm A^\top * \fm Z^{(k)}$ and $ - \fm A * \fm X^{(k)} +  \fm Z^{(k+1)} - \fm B $ at each iteration, ensure that these entries are prioritized for
elimination and so to accelerate the convergence of $\fm Z^{(k)}$ and $\fm X^{(k)}$. Whereas TREBK or TREGBK instead update a static, predefined block of slices  and randomly selects them at each
step, TGDBEK builds these blocks at each iteration from the current residual and  are not necessarily randomly sampled.

\refstepcounter{algorithm}%
\label{alg:alg-1}%
\begin{center}
\begin{tabular*}{0.95\textwidth}{l}
\toprule {\bf Algorithm 1:} Tensor Greedy Double Block Extended Kaczmarz (TGDBEK)\\
\hline \noalign{\smallskip}
\quad {\bf Input}: $\mathcal{A}\in\mathbb{R}^{N_1\times N_2\times N_3}$, $\fm B\in\mathbb{R}^{N_1\times K\times N_3}$, iterations $\ell$, parameter $\eta\in(0,1]$.\\
\noalign{\smallskip}
\quad {\bf Initialize}: $\mathcal{X}^{(0)}=\mathcal{O}$ and $\mathcal{Z}^{(0)}=\fm B$.\\
\noalign{\smallskip}
\quad {\bf for} $k=0,1,\ldots,\ell-1$ {\bf do}\\
\noalign{\smallskip}
\quad \qquad $\displaystyle \varepsilon_k^{z}=\eta\max_{1\le j\le N_2}\left\{\frac{\big\|\mathcal{A}_{:,j,:}{}^\top * \mathcal{Z}^{(k)}\big\|_{\mathrm{F}}^2}{\big\|\mathcal{A}_{:,j,:}\big\|_{\mathrm{F}}^2}\right\}$\\ \noalign{\smallskip}
\quad \qquad $\displaystyle U_k=\left\{\, j\in[N_2] : \big\|\mathcal{A}_{:,j,:}{}^\top * \mathcal{Z}^{(k)}\big\|_{\mathrm{F}}^2 \ge \varepsilon_k^{z} \big\|\mathcal{A}_{:,j,:}\big\|_{\mathrm{F}}^2 \right\}$\\ \noalign{\smallskip}
\quad \qquad Update $\displaystyle \mathcal{Z}^{(k+1)}=\mathcal{Z}^{(k)}-\mathcal{A}_{:,U_k,:}*(\mathcal{A}_{:,U_k,:})^\dagger*\mathcal{Z}^{(k)}$\\
\noalign{\smallskip}
\quad \qquad $\displaystyle \varepsilon_k^{x}=\eta\max_{1\le i\le N_1}\left\{\frac{\big\|\fm B_{i, :, :}-\mathcal{Z}^{(k+1)}_{i, :, :}-\mathcal{A}_{i, :, :}*\mathcal{X}^{(k)}\big\|_{\mathrm{F}}^2}{\big\|\mathcal{A}_{i,:,:}\big\|_{\mathrm{F}}^2}\right\}$\\ \noalign{\smallskip}
\quad \qquad $\displaystyle J_k=\{\, i\in[N_1] : \big\|\fm B_{i, :, :}-\mathcal{Z}^{(k+1)}_{i, :, :}-\mathcal{A}_{i, :, :}*\mathcal{X}^{(k)}\big\|_{\mathrm{F}}^2\ge \varepsilon_k^{x}\,\Vert \ai{i} \Vert_F^2\}$\\ \noalign{\smallskip}
\quad \qquad Update $\displaystyle
\mathcal{X}^{(k+1)}=\mathcal{X}^{(k)}+(\fm A_{J_k, :, :})^\dagger*
\Big(\fm B_{J_k,:,:}-\mathcal{Z}^{(k+1)}_{J_k,:,:}-\fm A_{J_k, :, :}*\mathcal{X}^{(k)}\Big)$\\
\noalign{\smallskip}
\quad {\bf end}\\
\bottomrule
\end{tabular*}
\end{center}

The Theorem~\ref{thm:main} states the lower bound for the sequence $\{\fm X^{(k)}\}$ generated by the Algorithm~\ref{alg:alg-1} and proves that it converges to the minimum-norm solution of eq.~\eqref{eq:eq1}. Its proof uses the  Lemma~\ref{lem:z-step}  which establishes the convergence of the sequence $\{\fm Z^{(k)}\}$ generated by the Algorithm~\ref{alg:alg-1}. 

\begin{lemma}[Linear Convergence of the $\fm Z$-Iterates]\label{lem:z-step}
Let
$\fm B^\perp := \fm B - \fm A * \fm A^\dagger * \fm B$ denote the component of $\fm B$ orthogonal to $\ran(\fm A)$. The sequence $\{\fm Z^{(k)}\}_{k \geq 0}$ generated by Algorithm~\ref{alg:alg-1} converges linearly to $\fm B^\perp$ satisfying, for all $k\geq 0$:
\begin{align}
    \Vert\fm Z^{(k+1)} - \fm B^\perp\Vert_F^2 \leq \beta\,\Vert\fm Z^{(k)} - \fm B^\perp\Vert_F^2,
\end{align}
with contraction factor: $\beta = 1 - \eta \frac{{\sigma_{\min}^+}^2(\bcirc(\fm A))}{\Vert \bcirc(\fm A) \Vert_F^2} \in (0,1)$.
\end{lemma}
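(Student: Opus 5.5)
Write $\fm E^{(k)} := \fm Z^{(k)} - \fm B^\perp$ and $\fm P_k := \a{U_k} * (\a{U_k})^\dagger$, which is an orthogonal projector by Lemma~\ref{lem:lem-proj}. The plan has three steps: show the error obeys a projection recursion, bound one step via the greedy rule on a single lateral slice, and then pass from that slice to the whole error through $\sigma_{\min}^+$.

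\textbf{Error recursion.} Since $\fm B^\perp = (\mcali - \fm A*\fm A^\dagger)*\fm B$ and $\fm A^\top * (\mcali - \fm A * \fm A^\dagger) = \fm O$ by Fact~3, we have $\fm A^\top * \fm B^\perp = \fm O$. In particular $(\a{U_k})^\top * \fm B^\perp = \fm O$. Because $(\a{U_k})^\dagger = ((\a{U_k})^\top * \a{U_k})^\dagger * (\a{U_k})^\top$, this gives $\fm P_k * \fm B^\perp = \fm O$. Hence
\[
\fm E^{(k+1)} = (\mcali - \fm P_k) * \fm E^{(k)},
\]
and Lemma~\ref{lem:app-pythag} yields $\Vert \fm E^{(k+1)}\Vert_F^2 = \Vert \fm E^{(k)}\Vert_F^2 - \Vert \fm P_k * \fm E^{(k)}\Vert_F^2$.

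Next, $\fm Z^{(0)} = \fm B$, so $\fm E^{(0)} = \fm A*\fm A^\dagger*\fm B \in \ran(\fm A)$. Each update subtracts an element of $\ran(\a{U_k}) \subseteq \ran(\fm A)$, so by induction $\fm E^{(k)} \in \ran(\fm A)$ for all $k$. This invariance is what later allows the lower bound \eqref{eq:sigma-min-bound} applied to $\fm A^\top$.

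\textbf{One-slice lower bound.} Let $j_k \in U_k$ attain the maximum in $\varepsilon_k^z$. Since $U_k \ni j_k$, $\ran(\fm P_k)$ contains $\ran(\a{j_k})$, so projecting onto the larger range captures at least as much:
\[
\Vert \fm P_k * \fm E^{(k)}\Vert_F^2 \ge \Vert \a{j_k}*(\a{j_k})^\dagger * \fm E^{(k)}\Vert_F^2 \ge \frac{\Vert (\a{j_k})^\top * \fm E^{(k)}\Vert_F^2}{\sigma_{\max}^2(\bcirc(\a{j_k}))}.
\]
The second inequality writes the projection as $((\a{j_k})^\dagger)^\top * (\a{j_k})^\top * \fm E^{(k)}$ and uses $\Vert(\a{j_k})^\top * \fm Y\Vert_F \le \sigma_{\max}\Vert \a{j_k}*(\a{j_k})^\dagger*\fm Y\Vert_F$ from Lemma~\ref{lem:lem-12}. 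Now bound $\sigma_{\max}^2(\bcirc(\a{j_k})) \le \Vert\bcirc(\a{j_k})\Vert_F^2 = N_3\Vert \a{j_k}\Vert_F^2$ via \eqref{eq:bcirc-frob}. Also note $(\a{j})^\top * \fm E^{(k)} = (\a{j})^\top * \fm Z^{(k)}$.

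\textbf{Greedy averaging and range bound.} This is the step I expect to be the main obstacle. The maximum ratio dominates the weighted average, so
\[
\frac{\Vert(\a{j_k})^\top*\fm Z^{(k)}\Vert_F^2}{\Vert\a{j_k}\Vert_F^2} \ge \frac{\sum_j \Vert(\a{j})^\top*\fm E^{(k)}\Vert_F^2}{\Vert\fm A\Vert_F^2} = \frac{\Vert\fm A^\top*\fm E^{(k)}\Vert_F^2}{\Vert\fm A\Vert_F^2}.
\]
Since $\fm E^{(k)} \in \ran(\fm A) = \ran((\fm A^\top)^\top)$, the bound \eqref{eq:sigma-min-bound} for $\fm A^\top$ gives $\Vert\fm A^\top * \fm E^{(k)}\Vert_F^2 \ge {\sigma_{\min}^+}^2(\bcirc(\fm A))\Vert\fm E^{(k)}\Vert_F^2$. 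Combining this with $\Vert\bcirc(\fm A)\Vert_F^2 = N_3\Vert\fm A\Vert_F^2$ produces the contraction $1 - {\sigma_{\min}^+}^2/\Vert\bcirc(\fm A)\Vert_F^2$ up to the $N_3$ factors. The bookkeeping must be done carefully: the $N_3$ from the single-slice bound and the $N_3$ in $\Vert\bcirc(\fm A)\Vert_F^2$ have to cancel consistently. If they do not, I would instead use $\sigma_{\max}^2(\bcirc(\a{j})) \le \Vert \a{j}\Vert_F^2 \cdot N_3$ matched against the normalization in $\beta$.

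The factor $\eta$ enters because every $j\in U_k$ satisfies the threshold $\varepsilon_k^z$. The stated $\beta$ with $\eta$ is weaker than what the maximizer alone gives, since $\eta\le1$, so it follows a fortiori. Finally, $\beta\in(0,1)$ because ${\sigma_{\min}^+}^2 \le \sigma_{\max}^2 \le \Vert\bcirc(\fm A)\Vert_F^2$ with strict inequality unless the rank is one. In that degenerate case I would note the bound holds with $\beta\ge0$ trivially.
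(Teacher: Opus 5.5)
Your proof is correct, but it reaches the lemma by a different and in fact sharper route than the paper.

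The error recursion and the Pythagorean step coincide with the paper's. You also make explicit the invariance $\fm Z^{(k)}-\fm B^\perp\in\ran(\fm A)$, which the paper uses tacitly when it applies the $\sigma_{\min}^+$ lower bound of Lemma~\ref{lem:lem-12} to $\fm A^\top$.

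The real difference is how you bound $\Vert\fm P_k*(\fm Z^{(k)}-\fm B^\perp)\Vert_F^2$ from below. The paper keeps the whole greedy block. It bounds this quantity below by $\sum_{j\in U_k}\Vert\a{j}^\top*\fm Z^{(k)}\Vert_F^2/\sigma_{\max}^2(\bcirc(\a{U_k}))$, invokes the threshold $\varepsilon_k^z$ for every $j\in U_k$, and then uses $\Vert\a{U_k}\Vert_F^2/\sigma_{\max}^2(\bcirc(\a{U_k}))\ge 1/N_3$. That threshold step is exactly where $\eta$ enters.

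You instead drop to the single maximizing slice $j_k\in U_k$ and use monotonicity of orthogonal projections onto the nested ranges $\ran(\a{j_k})\subseteq\ran(\a{U_k})$. The threshold is never used. You obtain the $\eta$-free factor $1-{\sigma_{\min}^+}^2(\bcirc(\fm A))/\Vert\bcirc(\fm A)\Vert_F^2$, and the stated $\beta$ follows since $\eta\le 1$. Your sentence that $\eta$ ``enters because every $j\in U_k$ satisfies the threshold'' therefore describes the paper's argument, not yours.

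Your worry about the $N_3$ bookkeeping is unfounded. The bound $\sigma_{\max}^2(\bcirc(\a{j_k}))\le N_3\Vert\a{j_k}\Vert_F^2$ together with $N_3\Vert\fm A\Vert_F^2=\Vert\bcirc(\fm A)\Vert_F^2$ cancels exactly, just as in the paper. No fallback is needed.

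As for what each route buys:
\begin{itemize}
\item Yours shows the rate does not degrade as $\eta\to 0$, so the $\eta$ in $\beta$ is an artifact of the analysis.
\item The paper's block bound retains, at an intermediate stage, the factor $\Vert\a{U_k}\Vert_F^2/\sigma_{\max}^2(\bcirc(\a{U_k}))$. This is the stable rank of the block divided by $N_3$, and it could capture a gain from large, well-conditioned blocks. The paper then bounds it by $1/N_3$ and so ends with a weaker constant than yours.
\end{itemize}

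Your remark on the degenerate case is also right. If $\eta=1$ and $\bcirc(\fm A)$ has rank one, then $\beta=0$, so the claim $\beta\in(0,1)$ should read $\beta\in[0,1)$. The contraction inequality itself still holds.
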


\begin{proof}[Proof of Lemma~\ref{lem:z-step}]
From Fact~3 we obtain
\begin{align*}
    \fm A^\top *\fm B^\perp  =\fm A^\top  *\fm B - \fm A^\top * \fm A * \fm A^\dagger * \fm B =\fm A ^\top * \fm B  - \fm A ^\top * \fm B = 0.
\end{align*}
In particular, for any $U_k \subset [N_2]$ we get $\fm A_{:, U_k, :}^\top *\fm B^\perp = \fm O$. Now, using the fact $\fm A * \fm A^\dagger = \left(\fm A * \fm A^\dagger \right)^\top = (\fm A^\dagger)^\top * \fm A ^\top = (\fm A^\top )^\dagger * \fm A^\top$
(which follows from Penrose condition~(iii) and Fact~3) and the $\fm Z-$ update in Algorithm~\ref{alg:alg-1}, we can get
\begin{align*}
    \fm Z^{(k+1)} - \fm B^\perp  & = \left(\fm Z^{(k)} - \fm B^\perp\right) - \left(\fm A^\top_{:, {U_k}, :} \right)^\dagger* \fm A^\top_{:, U_k, :} *\left( \fm Z ^{(k)}-\fm B^\perp \right)\\
    & = \left(\fm I - \left( \a{U_k}^\top\right) ^\dagger * \a{U_k}^\top\right)*\left( \fm Z^{(k)}- \fm B^\perp \right).
\end{align*}

Since $\left(\fm A^\top_{:,U_k,:}\right)^\dagger \fm A^\top_{:,U_k,:}$ is a t-orthogonal projector, the Pythagorean theorem (Lemma~\ref{lem:app-pythag}) gives
\begin{align}
    \Vert\fm Z^{(k+1)} - \fm B^\perp\Vert_F^2  = \Vert\fm Z^{(k)} - \fm B^\perp\Vert_F^2 - \Vert\left(\fm A^\top_{:,U_k,:}\right)^\dagger* \fm A^\top_{:, U_k, :}*\left(\fm Z^{(k)}-\fm B^\perp\right)\Vert_F^2. \label{eq:eq-1b}
\end{align}

We need to upper bound the  second term of the addition above. Since $\a{U_k}^\top * \fm Z^{(k)} \in \ran(\a{U_k}^\top) = \ran\!\left(((\a{U_k}^\top)^\dagger)^\top\right)$, the lower singular value bound in Lemma~\ref{lem:lem-12} applied to $(\a{U_k}^\top)^\dagger$ gives
\begin{align}
    \Vert \left(\a{U_k}^\top \right)^\dagger * \a{U_k}^\top *\left(\fm Z^{(k)}-\fm B^\perp \right) \Vert_F^2 & =  \Vert \left(\a{U_k}^\top \right)^\dagger * \a{U_k}^\top *\fm Z^{(k)}\Vert_F^2 \notag\\
    & \geq {\sigma_{\min}^+}^2((\a{U_k}^\top) ^\dagger)\Vert \a{U_k}^\top * \fm Z^{(k)} \Vert_F^2
  \quad \text{(by Lemma \ref{lem:lem-12})} \notag\\
    & = {\sigma_{\min}^+}^2(\bcirc(\a{U_k}^\top) ^\dagger) \sum_{j\in U_k} \Vert \a j ^\top * \fm Z^{(k)} \Vert_F^2 \notag\\
    & \geq {\sigma_{\min}^+}^2(\bcirc(\a{U_k}^\top) ^\dagger) \epsilon^z_k \Vert \a{U_k} \Vert_F^2 \notag\\
    & = \frac{\Vert \a{U_k} \Vert_F^2}{\sigma_{\max}^2(\bcirc(\a{U_k}))} \epsilon_k^z \notag\\
    & \geq \frac{1}{N_3} \epsilon_k^z,  \label{eq:eq-2}
\end{align}
where in the last inequality we use eq.~\eqref{eq:bcirc-frob}, and the facts $\sigma_{\max}^2(\m M^\top) = \sigma_{\max}^2(\m M)$ and $\Vert \m M \Vert_F^2 \geq \Vert \m M \Vert_2$, for any matrix $\m M$. 
Now, observe that
\begin{align*}
    \a{U_{k-1}}^\top *\fm Z^{(k)} & = \a{U_{k-1}}^\top *\left( \fm Z^{(k-1)} - \a{U_{k-1}}*\a{U_{k-1}}^\dagger * \fm Z^{(k-1)} \right)\\
    & =  \a{U_{k-1}}^\top*\fm Z^{(k-1)} - \a{U_{k-1}}^\top *\a{U_{k-1}} * \a{U_{k-1}}^\dagger* \fm Z^{(k-1)} = \fm O,
\end{align*}
so that for all $k=0, 1, \ldots$,
\begin{align*}
    \Vert \fm A^\top * \fm Z^{(k)}\Vert_F^2  & = \sum_{j \in U_{k-1}} \Vert \a j ^\top  * \fm Z^{(k)} \Vert_F^2 + \sum_{j\in U^c_{k-1}}\Vert \a j ^\top  * \fm Z^{(k)} \Vert_F^2 \\
    &=  \sum_{j\in U^c_{k-1}}\Vert \a j ^\top  * \fm Z^{(k)} \Vert_F^2  =  \sum_{j\in U^c_{k-1}} \frac{ \Vert \a j ^\top  * \fm Z^{(k)} \Vert_F^2}{ \Vert \a j \Vert_F^2}\Vert \a j \Vert_F^2\\
    & \leq \max_{j \in [N_2]}\left\{  \frac{ \Vert \a j ^\top  * \fm Z^{(k)} \Vert_F^2}{ \Vert \a j \Vert_F^2} \right\}\left(\Vert \fm A \Vert_F^2 - \Vert \a{U_k} \Vert_F^2 \right)\\
    & \leq \max_{j \in [N_2]}\left\{  \frac{ \Vert \a j ^\top  * \fm Z^{(k)} \Vert_F^2}{ \Vert \a j \Vert_F^2} \right\} \Vert \fm A \Vert_F^2, 
\end{align*}
since  $\Vert \fm A \Vert_F^2 - \Vert \a{U_k} \Vert_F^2\leq \Vert \fm A \Vert_F^2$. Hence
\begin{align}
    \epsilon^z_k = \eta \max_{j \in [N_2]}\left\{  \frac{ \Vert \a j ^\top  * \fm Z^{(k)} \Vert_F^2}{ \Vert \a j \Vert_F^2} \right\} &\geq \eta \frac{ \Vert \fm A^\top * \fm Z^{(k)}\Vert_F^2}{\Vert \fm A \Vert_F^2 } \notag\\
    & \geq \eta\frac{ {\sigma_{\min}^+}^2(\bcirc(\fm A^\top))}{\Vert \fm A \Vert_F^2} \Vert \fm Z^{(k)}-\fm B^\perp \Vert_F^2 \notag\\
    & = \eta N_3 \frac{ {\sigma_{\min}^+}^2(\bcirc(\fm A^\top))}{\Vert \bcirc(\fm A )\Vert_F^2} \Vert \fm Z^{(k)}-\fm B^\perp \Vert_F^2. \label{eq:eq-3}
\end{align}
The last inequality holds from  Lemma~\eqref{lem:lem-12} applied to $\fm A^\top$ and again eq.~\eqref{eq:eq-3} uses the relation between $\Vert \fm A \Vert$ and $\Vert \bcirc(\fm A) \Vert_F$ given in eq.~\eqref{eq:bcirc-frob}.
Substituting \eqref{eq:eq-3} into \eqref{eq:eq-2}, we obtain the lower bound
\begin{align}
     \Vert \left(\a{U_k}^\top \right)^\dagger * \a{U_k}^\top *\left(\fm Z^{(k)}-\fm B^\perp \right) \Vert_F^2 & \geq \eta  \frac{{\sigma_{\min}^+}^2(\bcirc(\fm A^\top))}{\Vert \bcirc(\fm A^\top)\Vert_F^2} \Vert \fm Z^{(k)}-\fm B^\perp \Vert_F^2. \label{eq:eq-4}
\end{align}
Finally, the combination of  \eqref{eq:eq-4} and \eqref{eq:eq-1b} gives
\begin{align*}
     \Vert \fm Z^{(k+1)} - \fm B^\perp \Vert_F^2
     \leq \underbrace{\left( 1 - \eta \frac{{\sigma_{\min}^+}^2(\bcirc(\fm A^\top))}{\Vert \bcirc(\fm A ) \Vert_F^2} \right)}_{=:\,\beta}\Vert \fm Z^{(k)} - \fm B^\perp \Vert_F^2. 
\end{align*}
\end{proof}

\begin{theorem}[Linear Convergence of TGDBEK]
\label{thm:main}
Assume the system \eqref{eq:eq1} is inconsistent and let $\mathcal{B}^\perp := \mathcal{B} - \mathcal{A} * \mathcal{A}^\dagger * \mathcal{B} \in \mathrm{null}(\mathcal{A}^\top)$ denote the component of $\mathcal{B}$ orthogonal to the range space $R(\mathcal{A})$. 

Let $J_k \subseteq [N_1]$ be the active row blocks selected by Algorithm~\ref{alg:alg-1} at iteration $k$. Then, the sequence $\{\mathcal{X}^{(k)}\}_{k \ge 0}$ generated by Algorithm~\ref{alg:alg-1} converges linearly in the Frobenius norm to the unique minimum-norm least-squares solution $\mathcal{X}_* = \mathcal{A}^\dagger * \mathcal{B}$. 

Specifically, for all $k \ge 0$:
\begin{equation}
\label{eq:rate}
\|\mathcal{X}^{(k+1)} - \mathcal{X}_*\|_F^2 \le \alpha^{k+1} \|\mathcal{X}^{(0)} - \mathcal{X}_*\|_F^2 + \gamma \left( \frac{\alpha^{k+1} - \beta^{k+1}}{\alpha - \beta} \right) \beta \|\mathcal{B} - \mathcal{B}^\perp\|_F^2 \quad (\text{if } \alpha \neq \beta),
\end{equation}
and if $\alpha = \beta$:
\begin{equation}
\label{eq:main_error_bound_equal}
\|\mathcal{X}^{(k+1)} - \mathcal{X}_*\|_F^2 \le \alpha^{k+1} \|\mathcal{X}^{(0)} - \mathcal{X}_*\|_F^2 + \gamma (k+1) \alpha^{k+1} \|\mathcal{B} - \mathcal{B}^\perp\|_F^2,
\end{equation}
where
\begin{equation*}
\label{eq:rate_constants}
\alpha = 1 - \theta \cdot \kappa_{\mathrm{block}}(\mathcal{A}) \in (0, 1), \quad  \gamma = \frac{1}{\left(\sigma_{\mathrm{min, block}}^+(\mathcal{A})\right)^2}, \quad \beta = 1 - \eta\,
     \frac{{\sigma_{\min}^+}^2\!\big(\bcirc(\fm A)\big)}
         {\Vert \bcirc(\fm A)\Vert_F^2} \in (0, 1),
\end{equation*}
with $\eta \in (0,1]$ the parameter of Algorithm~\ref{alg:alg-1}.
\end{theorem}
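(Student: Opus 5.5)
We follow the standard extended-Kaczmarz template: first derive a one-step recursion $e_{k+1}\le \alpha e_k + \gamma\,\|\fm Z^{(k+1)}-\fm B^\perp\|_F^2$ for $e_k:=\|\fm X^{(k)}-\fm X_*\|_F^2$, then unroll it using Lemma~\ref{lem:z-step}. To get the recursion, write $\fm P_k:=(\ai{J_k})^\dagger*\ai{J_k}$. Since $\fm A*\fm X_*=\fm B-\fm B^\perp$, we have $\fm B_{J_k,:,:}-\fm B^\perp_{J_k,:,:}=\ai{J_k}*\fm X_*$. Subtracting $\fm X_*$ from the $\fm X$-update therefore gives
\begin{align*}
\fm X^{(k+1)}-\fm X_* = (\mcali-\fm P_k)*(\fm X^{(k)}-\fm X_*) + (\ai{J_k})^\dagger*\big(\fm B^\perp_{J_k,:,:}-\fm Z^{(k+1)}_{J_k,:,:}\big).
\end{align*}
The second term lies in $\ran((\ai{J_k})^\dagger)=\ran(\ai{J_k}^\top)$ by Lemma~\ref{lem:app-range-facts}. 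It is therefore orthogonal to the first term, which lies in the range of the complementary projector $\mcali-\fm P_k$. The Pythagorean identity (Lemma~\ref{lem:app-pythag}) then splits the squared norm into two pieces.

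For the first piece, $\|(\mcali-\fm P_k)*(\fm X^{(k)}-\fm X_*)\|_F^2=(1-\theta_k)\,e_k\le(1-\theta)\,e_k$, which is simply the definition of $\theta_k$. This is at most $(1-\theta\kappa_{\mathrm{block}})\,e_k=\alpha e_k$, because $\kappa_{\mathrm{block}}\le1$. For the second piece, Lemma~\ref{lem:lem-12} together with $\sigma_{\max}(\bcirc(\fm M)^\dagger)=1/\sigma_{\min}^+(\bcirc(\fm M))$ gives the bound $\|\fm Z^{(k+1)}_{J_k}-\fm B^\perp_{J_k}\|_F^2/{\sigma_{\min}^+}^2(\bcirc(\ai{J_k}))$. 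This is at most $\gamma\,\|\fm Z^{(k+1)}-\fm B^\perp\|_F^2$, using the definition of $\sigma^+_{\min,\mathrm{block}}$ and the fact that restricting to rows $J_k$ does not increase the norm. Hence $e_{k+1}\le \alpha e_k+\gamma\|\fm Z^{(k+1)}-\fm B^\perp\|_F^2$.

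Next, Lemma~\ref{lem:z-step} iterated from $\fm Z^{(0)}=\fm B$ gives $\|\fm Z^{(k+1)}-\fm B^\perp\|_F^2\le \beta^{k+1}\|\fm B-\fm B^\perp\|_F^2$. Unrolling the recursion yields
\begin{align*}
e_{k+1}\le \alpha^{k+1}e_0+\gamma\|\fm B-\fm B^\perp\|_F^2\sum_{i=0}^{k}\alpha^{k-i}\beta^{i+1}.
\end{align*}
The geometric sum equals $\beta(\alpha^{k+1}-\beta^{k+1})/(\alpha-\beta)$ when $\alpha\ne\beta$, and $(k+1)\alpha^{k+1}$ when $\alpha=\beta$. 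These are exactly \eqref{eq:rate} and \eqref{eq:main_error_bound_equal}. Both right-hand sides tend to $0$ since $\alpha,\beta<1$, which gives convergence to $\fm X_*$. We also need $\alpha\in(0,1)$: this holds because $\theta>0$ and $\kappa_{\mathrm{block}}>0$, and $\alpha<1$ requires $\theta\kappa_{\mathrm{block}}>0$.

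\textbf{Main obstacle.} The contraction on the $\ran(\ai{J_k}^\top)$ component is the weak point. The greedy rule alone does not give a uniform fraction of error captured, which is why the proof must rely on the assumed parameter $\theta$. I would justify $\alpha$ precisely through $\theta$, as above, noting that the $\kappa_{\mathrm{block}}$ factor only weakens the bound. One could alternatively derive it via $\|\ai{J_k}*(\cdot)\|^2\ge{\sigma^+_{\min}}^2\|\cdot_{\ran}\|^2$ and $\|\fm P_k(\cdot)\|^2\ge\|\ai{J_k}(\cdot)\|^2/\sigma_{\max}^2$, which produces the ratio $\kappa_{\mathrm{block}}$ naturally.

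A second subtlety is the orthogonality of the cross term. Its first factor is $(\mcali-\fm P_k)*(\cdot)$, which lies in the range of $\mcali-\fm P_k$. Its second factor lies in $\ran(\ai{J_k}^\top)=\ran(\fm P_k)$. These two ranges are orthogonal because $\fm P_k$ is an orthogonal projector by Lemma~\ref{lem:lem-proj}, so the cross term vanishes exactly and no Young-type inequality is needed.
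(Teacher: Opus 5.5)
Your proof is correct and follows the paper's skeleton: the same error decomposition, the same Pythagorean split, the same bound $\gamma\Vert\fm Z^{(k+1)}-\fm B^\perp\Vert_F^2$ on the noise term, then unrolling with Lemma~\ref{lem:z-step}. Where you differ is the contraction term, which you handle by a shorter and more elementary route.

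The paper lower-bounds $\Vert\fm P_k*(\fm X^{(k)}-\fm X_*)\Vert_F^2$ through a chain of singular-value estimates. It first bounds it by ${\sigma_{\min}^+}^2((\ai{J_k})^\dagger)\,\Vert\ai{J_k}*(\fm X^{(k)}-\fm X_*)\Vert_F^2$, and then by ${\sigma_{\min}^+}^2(\ai{J_k})$ times the squared norm of the range component. This is the only place where $\kappa_{\mathrm{block}}(\fm A)$ enters $\alpha$. You instead use that $\fm P_k$ is exactly the orthogonal projector onto $\ran((\ai{J_k})^\top)$. Hence $\Vert(\mcali-\fm P_k)*(\fm X^{(k)}-\fm X_*)\Vert_F^2=(1-\theta_k)\Vert\fm X^{(k)}-\fm X_*\Vert_F^2$ holds with equality, and you invoke $\kappa_{\mathrm{block}}(\fm A)\le 1$ only to match the stated constant. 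This shows that, with $\theta$ defined as in the paper, the singular-value chain only loses a factor. The recursion already holds with $1-\theta$ in place of $\alpha$, and block conditioning affects the bound only through $\gamma$.

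Your unrolling is also cleaner. You feed the full geometric decay $\Vert\fm Z^{(i+1)}-\fm B^\perp\Vert_F^2\le\beta^{i+1}\Vert\fm B-\fm B^\perp\Vert_F^2$ into $\sum_{i=0}^{k}\alpha^{k-i}(\cdot)$. This produces exactly $\beta(\alpha^{k+1}-\beta^{k+1})/(\alpha-\beta)$, respectively $(k+1)\alpha^{k+1}$. The paper's intermediate sum instead carries both the powers $\beta^{k-l+1}$ and the norms $\Vert\fm Z^{(k-l)}-\fm B^\perp\Vert_F^2$, and then uses only monotonicity. That step does not literally follow from its one-step recursion, even though its final bound coincides with yours.

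One small slip: $\theta>0$ and $\kappa_{\mathrm{block}}(\fm A)>0$ give $\alpha<1$, not $\alpha>0$. The lower bound $\alpha>0$ comes from $\theta<1$ together with $\kappa_{\mathrm{block}}(\fm A)\le 1$, and it is not needed for the estimate itself.
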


The proof of this theorem relies on the previous  contraction lemma \ref{lem:z-step}.

\begin{proof}[Proof of the main Theorem~\ref{thm:main}.]
Since $\b{J_k} = \ai{J_k}*\fm X^\ast + \bp{J_k}$, the $\fm X$-update becomes
\begin{align*}
   \fm X^{(k+1)} = \x{k} - \ai{J_k}^\dagger * \left(\ai{J_k}*\left(\x{k} - \fm X^\ast\right) + \left(\z{k+1}{J_k} - \bp{J_k}\right)\right).
\end{align*}
Subtracting $\fm X_\ast$ from both sides yields
\begin{align}
   \x{k+1} - \fm X_\ast = \left(\fm I - \ai{J_k}^\dagger * \ai{J_k}\right)*\left(\x{k} - \fm X^\ast\right) + \ai{J_k}^\dagger*\left(\z{k+1}{J_k} - \bp{J_k}\right).\label{eq:x-decomp}
\end{align}
$\ai{J_k}^\dagger * \ai{J_k}$ is a t-orthogonal projector  by Lemma~\ref{lem:lem-proj} ,  so the range $\ran\left(\fm I - \ai{J_k}^\dagger * \ai{J_k}\right)$ is orthogonal to $\ran(\ai{J_k}^\dagger)$. The Pythagorean theorem (Lemma~\ref{lem:app-pythag}) gives
\begin{align}
    \Vert\x{k+1} - \fm X_\ast\Vert_F^2 = \Vert\left(\fm I - \ai{J_k}^\dagger * \ai{J_k}\right)*\left(\x{k} - \fm X^\ast\right)\Vert_F^2 + \Vert\ai{J_k}^\dagger*\left(\z{k+1}{J_k} - \bp{J_k}\right)\Vert_F^2. \label{eq:eq-5}
\end{align}
It suffices to upper bound the two terms of the right-hand side in equation above. For the first term, note that we can decompose $\fm X^{(k+1)} - \fm X_\ast $ as follow
\begin{align}
    \x{k}-\fm X_\ast = \left( \x{k} - \fm X_\ast \right)_{\ran((\ai{J_k})^\top)} + \left(\x k - \fm X_\ast \right)_{\ran((\ai{J_k})^\top)^\perp }\label{eq:x-decomp-sub-space}.
\end{align}
Let  recall $\theta_k \in (0,1]$ defined by 
\begin{align}
    \theta_k = \frac{\bigl\Vert(\x{k} - \fm X_\ast)_{\ran((\ai{J_k})^\top)}\bigr\Vert_F^2}{\Vert\x{k} - \fm X_\ast\Vert_F^2},\label{eq:theta-def}
\end{align}
The constant $\theta_k$ cannot be zero. In fact, $\theta_k = 0$ means its numerator is also zero, which implies  $\left( \x{k} - \fm X_\ast \right)_{\ran((\ai{J_k})^\top)}= 0$ or $\x{k+1} - \fm X_\ast \in \ker(\ai{J_k}))=\ran((\ai{J_k})^\top)^\perp$, i.e. $\ai{J_k} \left( \x{k+1} - \fm X_\ast \right) = 0$. Hence, from the decomposition in eq~\eqref{eq:x-decomp}, it results $\x{k+1} - \fm X_\ast = \ai{J_k}^\dagger*\left(\z{k+1}{J_k} - \bp{J_k}\right)= \ai{J_k}^\dagger*\z{k+1}{J_k}$. In particular, $\x{k+1} - \fm X_\ast \in \ran((\ai{J_k})^\dagger) = \ran((\ai{J_k})^\top)$. We have both $\x{k+1} - \fm X_\ast \in \ran((\ai{J_k})^\top)$ and in its orthogonal, i.e. $\x{k+1} - \fm X_\ast  = 0$, for $k$.\\
\noindent
The first term in  the right-hand side of \eqref{eq:eq-5} satisfies
\begin{align}
\Vert \left( \fm I - \ai{J_k}^\dagger *  \ai{J_k} \right) *\left(\x k - \fm X^\ast   \right) \Vert_F^2  & = \Vert \x k - \fm X_\ast \Vert_F^2 \label{eq:just-1} \\
&- \Vert  \ai{J_k}^\dagger *  \ai{J_k}  *\left(\x k - \fm X^\ast   \right) \Vert_F^2  \notag\\
& \leq  \Vert \x k - \fm X_\ast \Vert_F^2 - {\sigma_{\min}^+}^2(\ai{J_k}^\dagger) \label{eq:just-2} \\
&\times \Vert \ai{J_k}  *\left(\x k - \fm X^\ast   \right) \Vert_F^2 \notag\\
& \leq  \Vert \x k - \fm X_\ast \Vert_F^2  -{\sigma_{\min}^+}^2(\ai{J_k}^\dagger) {\sigma_{\min}^+}^2(\ai{J_k})\notag \\
&\times \Vert \left( \x{k} - \fm X_\ast \right)_{\ran((\ai{J_k})^\top)} \Vert_F \label{eq:proof-1}\\
& = \Vert \x k - \fm X_\ast \Vert_F^2  - \frac{{\sigma_{\min}^+}^2(\bcirc(\ai{J_k}))}{\sigma^2_{\max}(\bcirc(\ai{J_k}))}\notag \\
& \times \theta_k \Vert \x k - \fm X_\ast \Vert_F^2  \label{eq:proof-2} \\
& = \left( 1 -  \theta_k \frac{{\sigma_{\min}^+}^2(\bcirc(\ai{J_k}))}{\sigma^2_{\max}(\bcirc(\ai{J_k}))}\right) \Vert \x k - \fm X_\ast \Vert_F^2 \notag  \\
& \leq \left( 1 - \theta \cdot  \kappa_{\mathrm{block}}(\mathcal{A})\right) \Vert \x k - \fm X_\ast \Vert_F^2 \label{eq:eq-6},
\end{align}
where the eq.~\eqref{eq:just-1} holds from Lemma~\ref{lem:app-pythag}, eq.~ \eqref{eq:just-2} uses  Lemma~\ref{lem:lem-12} part two and since by design, the greedy row blocks $J_k$ generated at each iteration  by Algorithm~\ref{alg:alg-1}  are non-empty, we have $J_k\in \Omega_r$. To see eq.~\eqref{eq:proof-1}, with the help of  Lemma~\ref{lem:app-range-facts} we get  $\ran((\ai{J_k})^\top)^\perp = \ker(\ai{J_k})$, then applying  $ \ai{J_k} $ in eq.~\eqref{eq:x-decomp-sub-space} gives
\begin{align*}
    \ai{J_k} \left(\x k - \fm X^\ast   \right)  
     = \ai{J_k} \left( \x{k} - \fm X_\ast \right)_{\ran((\ai{J_k})^\top)}.
\end{align*}
Since, $\left( \x{k} - \fm X_\ast \right)_{\ran((\ai{J_k})^\top)}\in \ran((\ai{J_k})^\top)$, apply Lemma~\ref{lem:lem-12} to get eq.~\eqref{eq:proof-1}, i.e.
\begin{align*}
    \Vert  \ai{J_k} \left(\x k - \fm X^\ast   \right)   \Vert_F^2 
    &= \Vert \ai{J_k} \left( \x{k} - \fm X_\ast \right)_{\ran((\ai{J_k})^\top)} \Vert \\
    & \geq {\sigma_{\min}^+}^2(\ai{J_k}) \Vert \left( \x{k} - \fm X_\ast \right)_{\ran((\ai{J_k})^\top)} \Vert_F^2.
\end{align*}
Furthermore, eq.~ \eqref{eq:proof-2} uses  ${\sigma_{\min}^+}^2(\m M^\dagger) = \frac{1}{\sigma_{\max}^2(\m M)}$ which holds for any matrix $\m M$.

\noindent
Using Lemma~\ref{lem:lem-12}, the second term in the right-hand side  of \eqref{eq:eq-5} satisfies,
\begin{align}
  \Vert  \ai{J_k}^\dagger* \left(\z{k+1}{J_k} - \bp{J_k}  \right) \Vert_F^2  & \leq \sigma_{\max}^2(\bcirc((\ai{J_k})^\dagger) )\Vert  \z{k+1}{J_k} - \bp{J_k} \Vert_F^2 \notag\\
  & = \frac{1}{{\sigma_{\min}^+}^2(\bcirc(\ai{J_k}))} \Vert  \z{k+1}{J_k} - \bp{J_k} \Vert_F^2 \notag\\
  & \leq \frac{1}{{\sigma_{\min}^+}^2(\bcirc(\ai{J_k}))} \Vert  \fm Z^{(k+1)} - \fm B^\perp \Vert_F^2 .
  \label{eq:eq-7}
\end{align}
It results from  \eqref{eq:eq-5}, \eqref{eq:eq-6}, and \eqref{eq:eq-7} that 
\begin{align*}
 \Vert \x{k+1} - \fm X_\ast \Vert_F^2 &  \leq \left( 1 -  \theta\frac{{\sigma_{\min}^+}^2(\bcirc(\ai{J}))}{\sigma^2_{\max}(\bcirc(\ai{J}))}\right) \Vert \x k - \fm X_\ast \Vert_F^2  \\
 & +  \frac{1}{{\sigma_{\min}^+}^2(\bcirc(\ai{J_k}))} \Vert  \fm Z^{(k+1)} - \fm B^\perp \Vert_F^2\\
 & \leq \left( 1 - \theta\frac{{\sigma_{\min}^+}^2(\bcirc(\ai{J}))}{\sigma^2_{\max}(\bcirc(\ai{J}))}\right) \Vert \x k - \fm X_\ast \Vert_F^2 \\
 & + \frac{\beta}{{\sigma_{\min}^+}^2(\bcirc(\ai{J_k}))} \Vert  \fm Z^{(k)} - \fm B^\perp \Vert_F^2\\
 & \leq \left( 1 - \theta\frac{{\sigma_{\min}^+}^2(\bcirc(\ai{J}))}{\sigma^2_{\max}(\bcirc(\ai{J}))}\right) \Vert \x k - \fm X_\ast \Vert_F^2 \\
 & + \frac{\beta}{(\sigma_{\min, \mathrm{block}}^+(\mathcal{A}))^2} \Vert  \fm Z^{(k)} - \fm B^\perp \Vert_F^2.
\end{align*}
Now, let $\alpha =  1 -\theta\cdot  \kappa_{\mathrm{block}}(\mathcal{A})$ and $\gamma \;=\; \frac{1}{(\sigma_{\min, \mathrm{block}}^+(\mathcal{A}))^2}$.
Then we have
\begin{align*}
    \Vert \x{k+1} - \fm X_\ast \Vert_F^2  & \leq \alpha  \Vert \x k - \fm X_\ast \Vert_F^2 + \gamma\beta \Vert \fm Z^{(k)} - \fm B^\perp \Vert_F^2\\
    & \leq \alpha \left( \alpha  \Vert \x k - \fm X_\ast \Vert_F^2 + \gamma\beta \Vert \fm Z^{(k)} - \fm B^\perp \Vert_F^2 \right) + \gamma\beta^2 \Vert \fm Z^{(k-1)} - \fm B^\perp \Vert_F^2\\
    &\leq \cdots\\
    & \leq \alpha^{k+1} \Vert \x 0 - \fm X_\ast \Vert_F^2  + \gamma\sum_{l=0}^k \alpha^{l} \beta^{k-l+1}  \Vert \fm Z^{(k-l)} - \fm B^\perp \Vert_F^2.
\end{align*}
The map $m\mapsto\Vert \fm Z^{(m)}-\fm B^\perp\Vert _F^2$ is a contraction by Lemma~\ref{lem:z-step}.
Therefore $\Vert \fm Z^{(m)}-\fm B^\perp\Vert _F^2\leq\Vert \fm Z^{(0)}-\fm B^\perp\Vert _F^2$
for all $m\geq 0$.
With the initialization $\fm Z^{(0)}=\fm B$ we have
$\Vert \fm Z^{(0)}-\fm B^\perp\Vert _F^2=\Vert \fm B-\fm B^\perp\Vert _F^2$,
so every term $\Vert \fm Z^{(k-l)}-\fm B^\perp\Vert _F^2$ in the sum is bounded above by
$\Vert \fm B - \fm B^\perp \Vert_F^2$.
Hence
\begin{align*}
    \Vert \x{k+1} - \fm X_\ast \Vert_F^2
    &\leq \alpha^{k+1} \Vert \x 0 - \fm X_\ast \Vert_F^2  + \gamma\Vert \fm B - \fm B^\perp \Vert_F^2\sum_{l=0}^k \alpha^{l} \beta^{k-l+1}\\
    &\leq \alpha^{k+1} \Vert \x 0 - \fm X_\ast \Vert_F^2 + \gamma \left(\frac{\alpha^{k+1} -\beta^{k+1}}{\alpha - \beta}\right) \beta\,\Vert \fm B - \fm B^\perp \Vert_F^2
\end{align*}
\end{proof}

\begin{remark}
We cannot exclude $\alpha = \beta$ \textit{a priori}, in which case the  quotient $\frac{\alpha^{k+1} -\beta^{k+1}}{\alpha - \beta}  = \sum_{l=0}^k \alpha^{l} \beta^{k-l+1}$ and the noise component of \eqref{eq:rate} becomes  clearly
\begin{align*}
    \gamma (k+1)\alpha^{k+1} \Vert \fm B - \fm B^\perp \Vert_F^2,
\end{align*}
which obviously tends to zero  as $k$ increases (by the ratio test) since generally $\alpha < 1$ . When $N_3=1$, $\fm A\in\mathbb R^{N_1\times N_2\times 1}$ is exactly a matrix $A\in\mathbb R^{N_1\times N_2}$, the t-product $*$ reduces to the ordinary
matrix multiplication, and $\bcirc(\fm A)=A$, so also
$\Vert\bcirc(\fm A)\Vert_F=\Vert \fm A\Vert_F$.
Algorithm~\ref{alg:alg-1} then reduces to the GDBEK algorithm of
\cite{su2025}, and the theorem above recovers 
\cite[Theorem~2.1]{su2025}: the only difference occurs in Lemma~\ref{lem:z-step},  where  we use $\Vert \fm A \Vert_F^2 - \Vert \a{U_k} \Vert_F^2\leq \Vert \fm A \Vert_F^2$.
\end{remark}

\section{Numerical experiments}\label{sec:numexp}
In this section, we conduct numerical experiments to demonstrate the effectiveness
of TGDBEK method and compare it with TREK \cite{molitor2022}, TREBK  ~\cite{huang2023TREK}, and TREGBK ~\cite{huang2023TREK} methods. All the experiments were implemented in Python using PyTorch and run on an Apple MacBook Pro with an M3 chip. The complete code is publicly available at \url{https://github.com/jnlandu/tensor-greedy-double-extended-kaczmarz}.

For a fair comparison, we use the number of iteration steps ("IT") and the running time ("CPU"). Here, IT and CPU represent the arithmetic mean of the number of iterations and running time required to  each algorithm to reach the tolerance $10^{-5}$ over $N$ independent trials. Sometimes we take $N=5$, $N=10$ or $N=20$. The algorithm terminates when the relative squared error (RSE) satisfies
\begin{align*}
    \mathrm{RSE} := \frac{\Vert \fm X^{(k)} - \fm X_\ast\Vert_F^2}{\Vert \fm X_\ast\Vert_F^2} < 10^{-5},
\end{align*}
where $\fm X_\ast = \fm A^\dagger * \fm B$ is generated using PyTorch, or when the total number of iterations  is exceeded. We set the initial iterates $\fm X^{(0)} = \fm O$ and $\fm Z^{(0)} = \fm B$. Finally, the right-hand side $\fm B$ is obtained as  $\fm B= \fm A * \fm X_\ast + \fm \epsilon$, where the the noise $\epsilon$, it is given as follows $\epsilon = a  \zeta  \frac{\Vert\fm B\Vert_F} {\Vert \zeta \Vert_F} $, with $a$ a positive constant and $\zeta \sim \fm N(0,1)$ a gaussian noise.

\begin{example}[\textbf{Dense}]\label{ex:dense}
In this example, we apply  TREK, TREBK,  TREGBK  and TGDBEK to solve a randomly generated dense tensor systems. The system tensor $\fm A \in \mathbb{R}^{500\times n\times 10}$ is constructed with  i.i.d standard Gaussian entries, and  similarly the  exact solution $\fm X_\ast \in \mathbb{R}^{n\times 10\times 10}$. We take  $n\in\{20,30,40,50,60,80\}$.
The right-hand side is computed as $\fm B = \fm A \ast \fm X_\ast + \fm\epsilon$, where the noise $\fm\epsilon$ is generated as described above with a noise level $a=10^{-3}$.
For TGDBEK  we choose the threshold $\eta=0.6$. The  block-partitioned methods use sequential blocks of size $\tau=10$, defined by $I_k= \{(k-1)\tau + 1, \ldots, \min\{k\tau, N\}\}$ for $k=1, \ldots,  \lfloor N/\tau \rfloor$, applied separately to $N_1 = 500$ and $N_2 =n$,  resulting to $10$ row blocks and  $\lfloor n/10 \rfloor$ column blocks. For this example, the total number of iteration is  $800$ and result averaged over 5 trials. See Table~\ref{tab:ex1} and Figs.~\ref{fig:ex1}--\ref{fig:ex1-z-rse}.

\begin{table}[htbp]
\caption{IT and CPU (averaged over 5 trials) for dense overdetermined tensor systems $\fm A\in\mathbb{R}^{500\times n\times 10}$, noise $a=10^{-3}$, maximum $800$ iterations. Bold: best per $n$.}\label{tab:ex1}
\centering\small\setlength{\tabcolsep}{4pt}
\begin{tabular}{llrrrrrr}
\toprule
Method & Metric & $n=20$ & $n=30$ & $n=40$ & $n=50$ & $n=60$ & $n=80$ \\
\midrule
\multirow{2}{*}{TREK}   & IT     & 547   & 800   & 800   & 800   & 800   & 800   \\
& CPU(s) & 1.114 & 1.458 & 1.569 & 1.569 & 1.473 & 1.559 \\
\midrule
\multirow{2}{*}{TREBK}  & IT     & 36    & 69    & 97    & 122   & 160   & 247   \\
& CPU(s) & 0.705 & 1.415 & 2.478 & 2.849 & 3.760 & 5.835 \\
\midrule
\multirow{2}{*}{TREGBK} & IT     & 37    & 63    & 90  & 123   & 151   & 217   \\
& CPU(s) & 0.580 & 1.168 & 1.860 & 2.707 & 3.416 & 5.632 \\
\midrule
\multirow{2}{*}{\textbf{TGDBEK}} & IT     & \textbf{23} & \textbf{28} & \textbf{33} & \textbf{35} & \textbf{37} & \textbf{44} \\
& CPU(s) & \textbf{0.497} & \textbf{0.652} & \textbf{1.007} & \textbf{1.401} & \textbf{1.460} & \textbf{2.385} \\
\bottomrule
\end{tabular}
\end{table}

From Table~\ref{tab:ex1}, as for IT and CPU, it is clear that the TGDBEK method requires the fewest iterations and least running time across all values of $n$. For example, the TREK method exhausts the total iteration without even reaching the given tolerance for $n\geq 30$. At $n=80$, TGDBEK converges in only $44$ iterations, compared to $247$ for TREBK and $217$ for TREGBK, this is undoubtedly a considerable reduction in both IT and CPU.

As can be seen from Fig.~\ref{fig:ex1-rse}, TGDBEK reaches $\mathrm{RSE} < 10^{-5}$ in $23$ iterations at $n=20$ and $44$ iterations at $n=80$. 
Fig.~\ref{fig:ex1-rse-cpu} shows the convergence curves (RSE) against CPU. Again, we can see that TGDBEK achieves the given tolerance faster than all competing methods. At $n=80$, TREK exhausted fast the total number of iterations without achieving the prescribed tolerance.
Fig.~\ref{fig:ex1} reports IT and CPU across all the values of $n$. It can be seen that  TGDBEK has the  least iteration count, and  requires fewer CPU among all the  methods.  Fig.~\ref{fig:ex1-z-rse} plots the errors for both  the x and z components against for $n=20$ and $n=80$. It reveals the convergence of the  sequence $\{\fm Z^{(k)}\}$. Finally, Fig.~\ref{fig:ex1-blocks} reports the cardinalities of greedy blocks $U_k$ and $J_k$ per iteration, and on average the TGDBEK  method builds blocks of size $4$ and $37.9$ for $n=20$ and $12.8$ and $40$ for $n=80$, respectively for $U_k$ and $J_k$.
\end{example}

\begin{figure}[!ht]
\centering
\begin{subfigure}[t]{0.48\linewidth}
  \includegraphics[width=\linewidth]{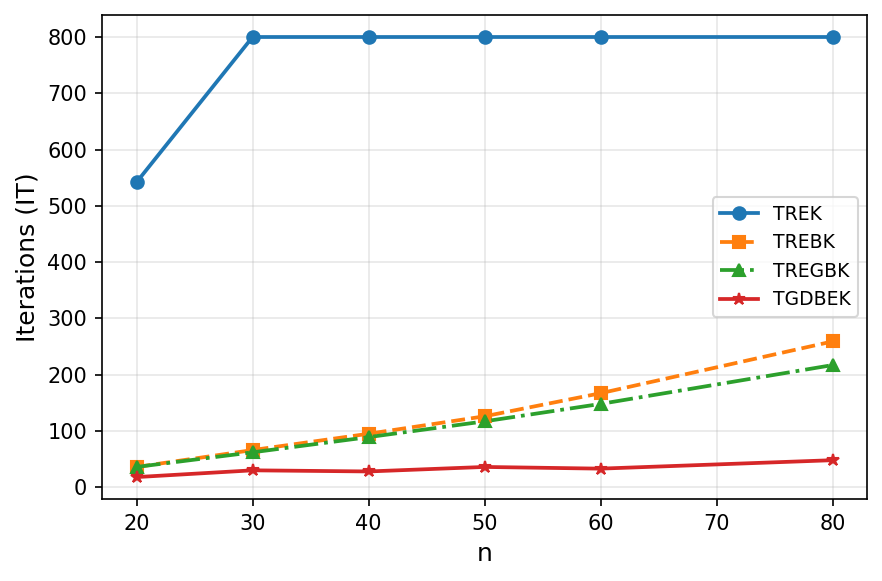}
  \caption{Iteration count (IT) vs.\ $n$.}
  \label{fig:ex1-IT}
\end{subfigure}\hfill
\begin{subfigure}[t]{0.48\linewidth}
  \includegraphics[width=\linewidth]{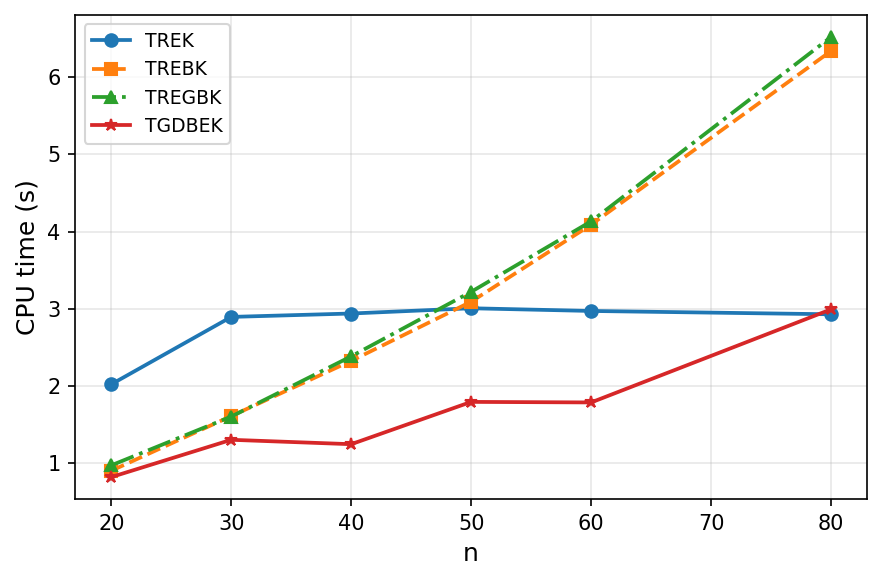}
  \caption{CPU time (s) vs.\ $n$.}
  \label{fig:ex1-CPU}
\end{subfigure}
\caption{IT and CPU as functions of $n$ for dense overdetermined systems ($\fm A\in\mathbb{R}^{500\times n\times 10}$, noise $a=10^{-3}$, max $800$ iterations, 5 trials). TREK exhausts the budget for $n\geq 30$.}\label{fig:ex1}
\end{figure}

\begin{figure}[!ht]
\centering
\includegraphics[width=\linewidth]{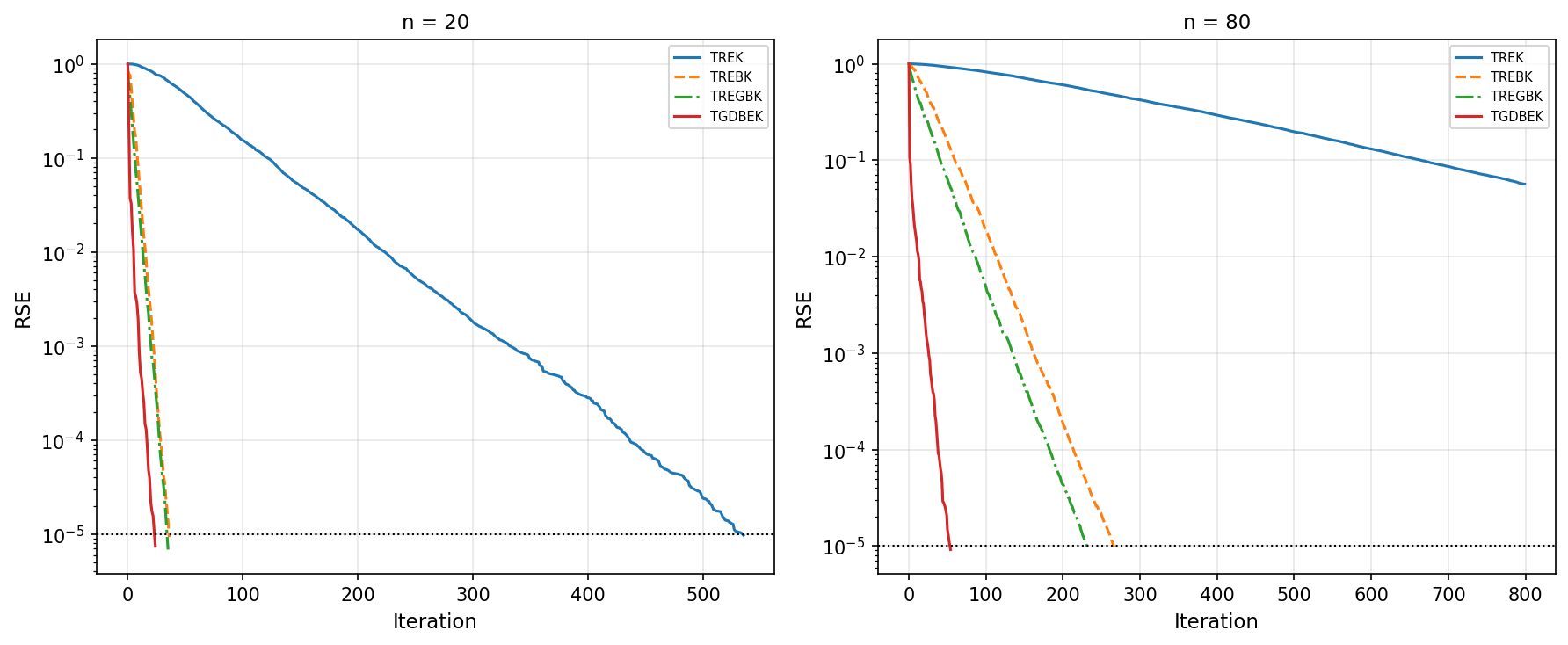}
\caption{RSE versus iteration for $n=20$ (left) and $n=80$ (right). TGDBEK converges in 23 and 43 iterations respectively; TREK exhausts the 800-iteration budget at $n=80$. Dotted line: tolerance $\mathrm{tol}=10^{-5}$.}\label{fig:ex1-rse}
\end{figure}

\begin{figure}[!ht] 
\centering
\includegraphics[width=\linewidth]{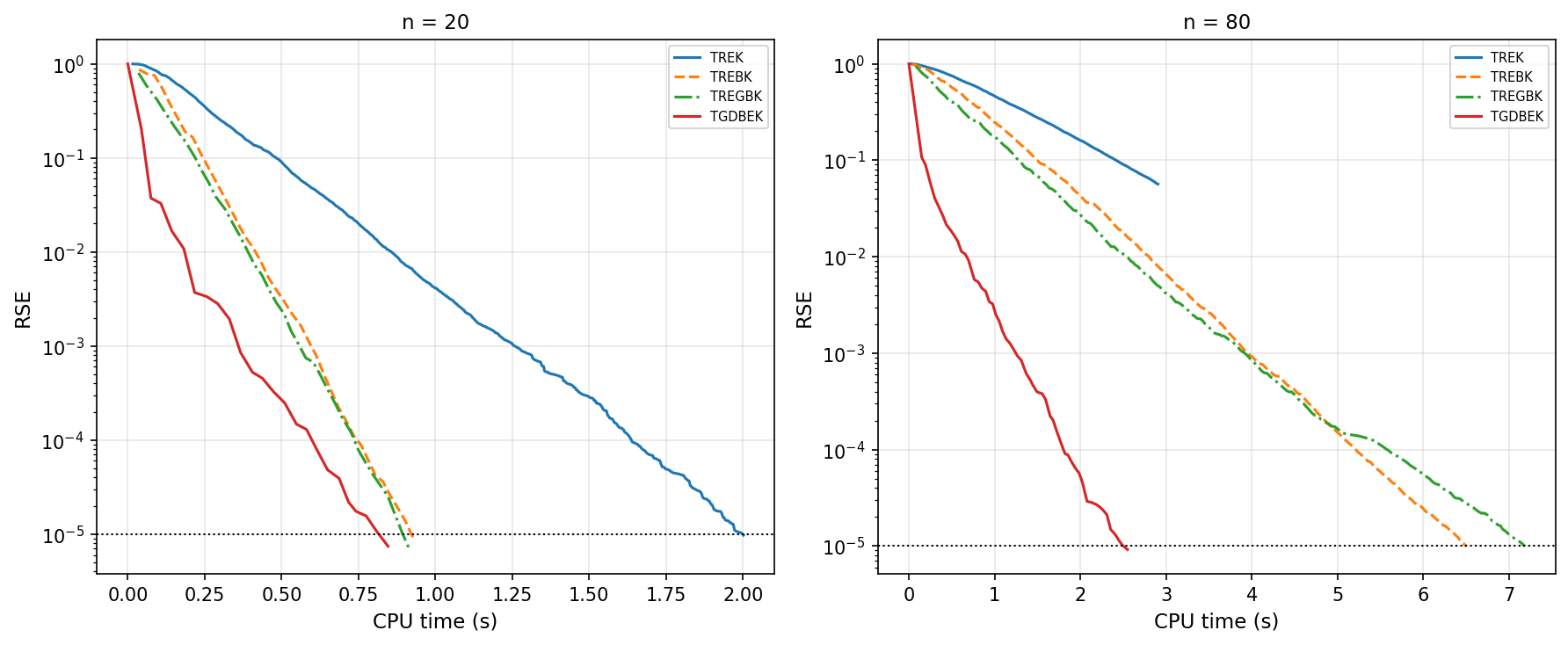}
\caption{$\mathrm{RSE}$ vs CPU  for $n=20$ (left) and $n=80$ (right). TGDBEK reaches the tolerance in the least time across both problem sizes; TREK exhausts the budget at $n=80$ without converging.}\label{fig:ex1-rse-cpu}
\vspace{-0.3cm}
\end{figure}

\begin{figure}[!ht]
\centering
\begin{subfigure}[t]{0.48\linewidth}
  \includegraphics[width=\linewidth]{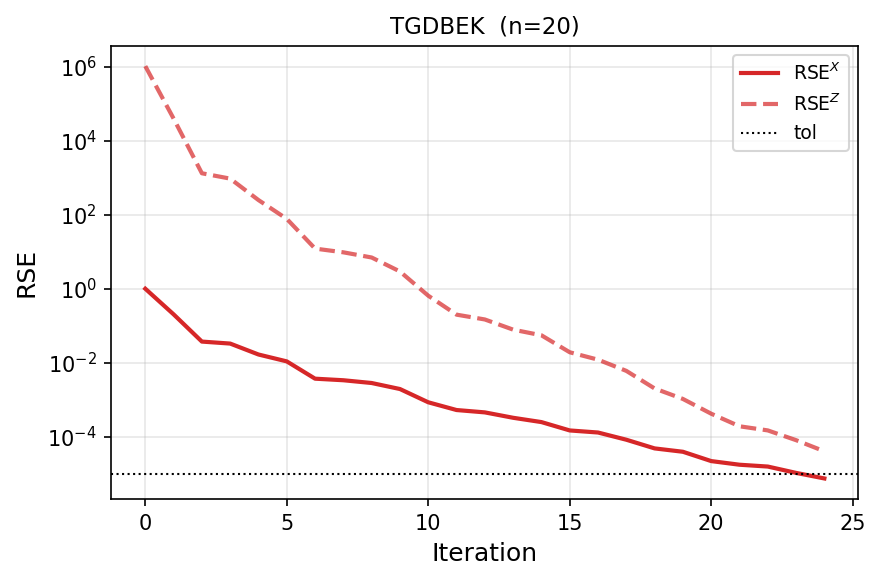}
  \caption{$\mathrm{RSE}$ for $\fm Z$ vs.\ iteration, $n=20$.}
  \label{fig:ex1-z-n20}
\end{subfigure}\hfill
\begin{subfigure}[t]{0.48\linewidth}
  \includegraphics[width=\linewidth]{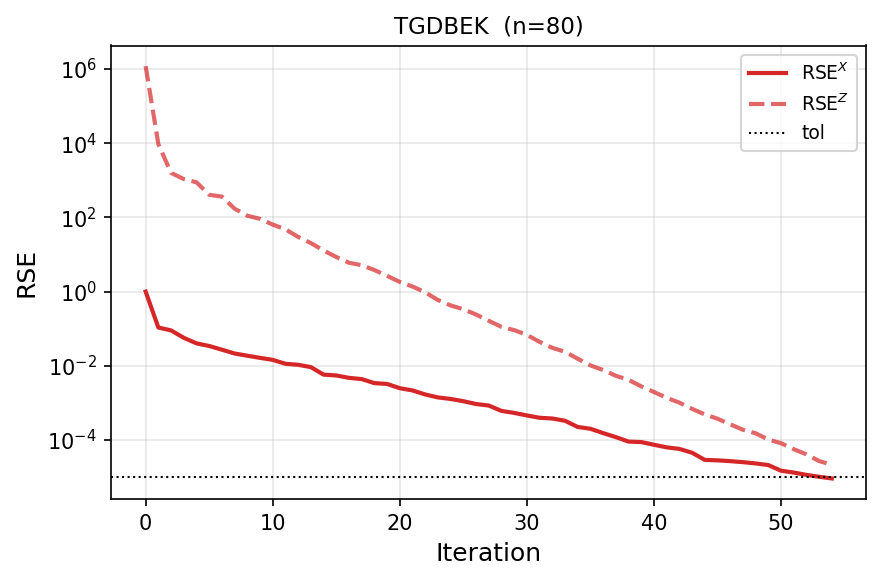}
  \caption{$\mathrm{RSE}$ for $\fm Z$ vs.\ iteration, $n=80$.}
  \label{fig:ex1-z-n80}
\end{subfigure}
\caption{Z-iterate relative squared error ($\mathrm{RSE}^Z$) versus iteration for $n=20$ (left) and $n=80$ (right). TGDBEK drives $\mathrm{RSE}^Z$ to zero faster than all competitors; the Z-iterate convergence mirrors the X-iterate pattern. Dotted line: tolerance $\mathrm{tol}=10^{-5}$.}\label{fig:ex1-z-rse}
\end{figure}

\begin{figure}[!ht]
\centering
\begin{subfigure}[t]{0.48\linewidth}
  \includegraphics[width=\linewidth]{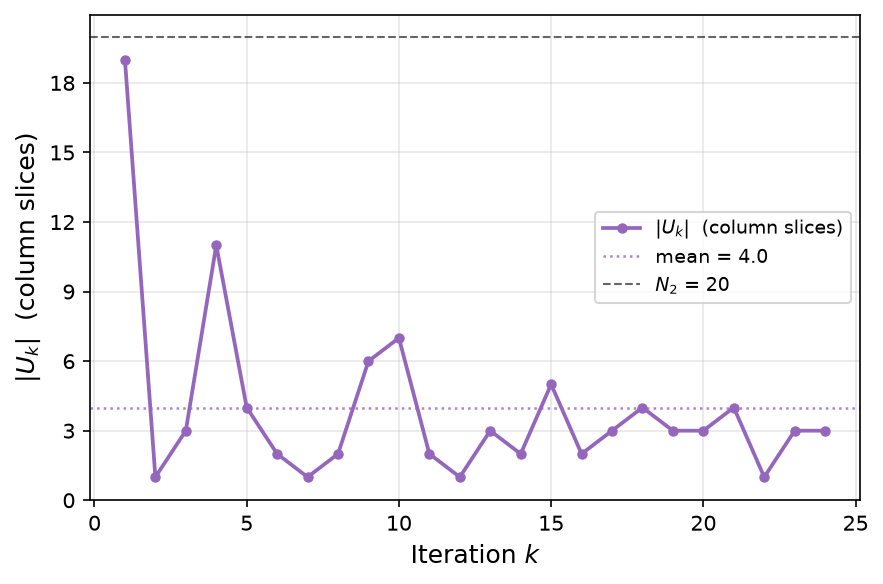}
  \caption{$|U_k|$, $n=20$.}\label{fig:ex1-blocks-n20-U}
\end{subfigure}\hfill
\begin{subfigure}[t]{0.48\linewidth}
  \includegraphics[width=\linewidth]{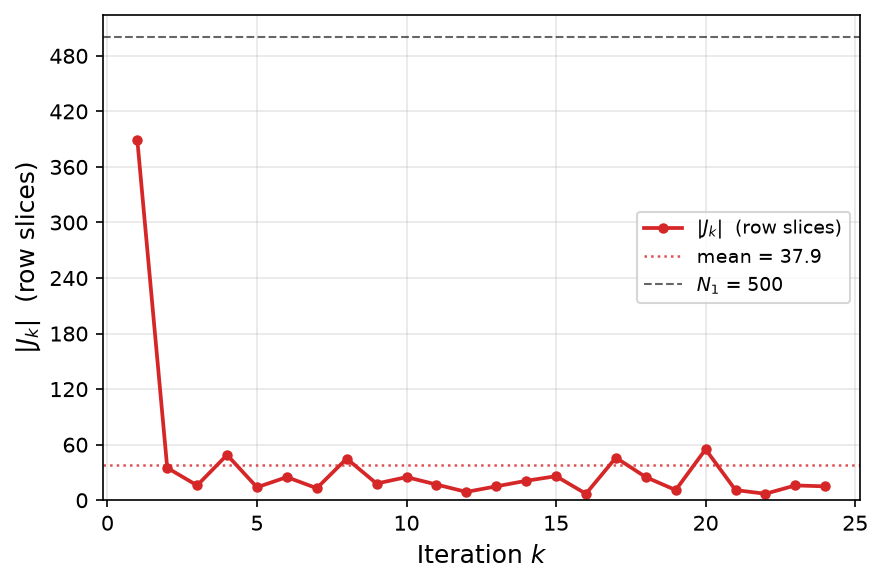}
  \caption{$|J_k|$, $n=20$.}\label{fig:ex1-blocks-n20-J}
\end{subfigure}
\\[0.6em]
\begin{subfigure}[t]{0.48\linewidth}
  \includegraphics[width=\linewidth]{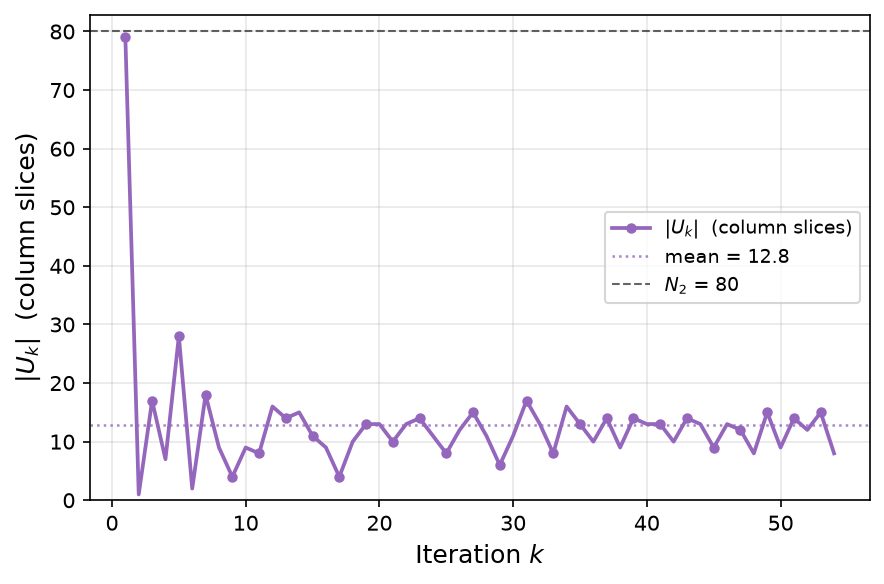}
  \caption{$|U_k|$, $n=80$.}\label{fig:ex1-blocks-n80-U}
\end{subfigure}\hfill
\begin{subfigure}[t]{0.48\linewidth}
  \includegraphics[width=\linewidth]{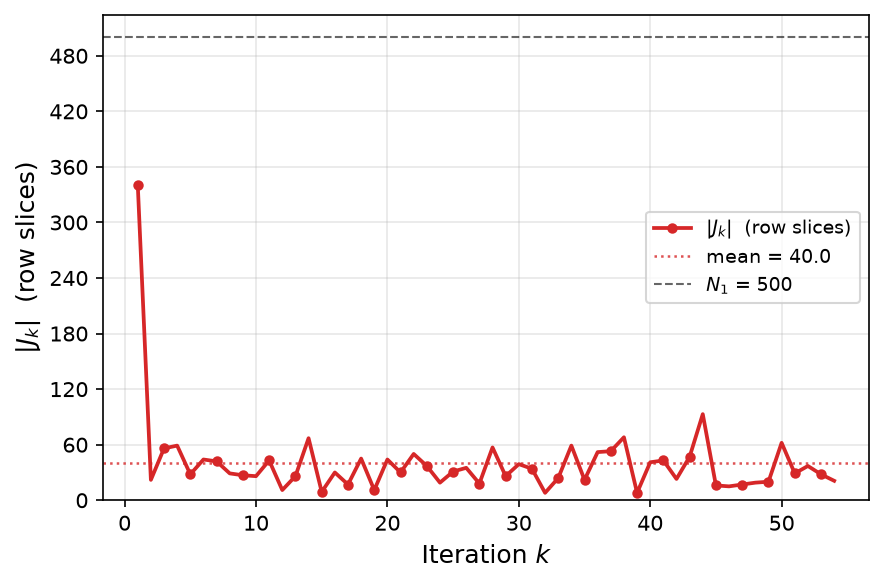}
  \caption{$|J_k|$, $n=80$.}\label{fig:ex1-blocks-n80-J}
\end{subfigure}
\caption{Cardinalities of each of the greedy blocks $U_k$ and $J_k$ versus iteration for dense systems $\fm A\in\mathbb{R}^{500\times n\times 10}$ ($\eta=0.6$, $a=10^{-3}$). Column set $|U_k|$ (left) and row set $|J_k|$ (right), for $n=20$ (top) and $n=80$ (bottom). Dotted line: mean over the run; dashed line: maximal admissible size ($N_2=n$ for $U_k$, $N_1=500$ for $J_k$).}\label{fig:ex1-blocks}
\end{figure}

\begin{example}[\textbf{Sparse}]\label{ex:sparse}
In this example, we use the TREK, TREBK, TREGBK, and TGDBEK methods to solve the tensor linear system~\eqref{eq:eq1}, where the coefficient tensor $\fm A$ is a sparse tensor. The sparse tensor here  $\fm A$ is constructed by reading five matrices from the SuiteSparse Matrix Collection~\cite{suiteSparse} and reshaping them into third-order tensors: \texttt{nos5} ($468\times39\times12$), \texttt{ash85} ($85\times17\times5$), \texttt{Cities} ($55\times23\times2$), \texttt{WorldCities} ($315\times20\times5$), and \texttt{gre\_216a} ($216\times24\times9$). The exact solution tensor $\fm X_\ast$ is randomly generated with standard Gaussian entries, the data tensor $\fm B$ is obtained by $\fm B = \fm A * \fm X_\ast + \fm\epsilon$, where $\fm\epsilon$ is generated as described above with noise level reported in Table~\ref{tab:ex2}, and the termination criterion is the same as Example~\ref{ex:dense}. The greedy threshold is $\eta=0.5$ for \texttt{nos5}, \texttt{ash85}, and \texttt{Cities}, and $\eta=0.7$ for \texttt{WorldCities} and \texttt{gre\_216a}. The number of blocks for TREBK  and TREGBK is determined by the dimension of the tensor $\fm A$.

For the different sparse tensor systems, IT, CPU, and RSE for the four methods when the desired accuracy is reached are given in Table~\ref{tab:ex2}. An iterative convergence diagram is given in Fig.~\ref{fig:ex2-nos5} for the \texttt{nos5} system. In the Fig.~\ref{fig:ex2-nos5}, the TREK method exhausted the total number of iterations, without achieving the required tolerance.

\begin{table}[!ht]
\caption{IT, CPU, and RSE for sparse tensor systems from SuiteSparse. Each block header gives tensor dimensions $N_1\times N_2\times N_3$ and density $\rho$ of the underlying sparse matrix. NaN: method failed to converge in the given iteration budget. Noise: \texttt{nos5} uses  $a=10^{-2}$; \texttt{ash85}, \texttt{Cities}, \texttt{gre\_216a} $a=10^{-3}$; \texttt{WorldCities} $a=10^{-1}$. Max iterations: $300$ (\texttt{nos5}, \texttt{ash85}); $6000$ (others). Bold: best per matrix.}\label{tab:ex2}
\centering\small\setlength{\tabcolsep}{4pt}
\begin{tabular*}{\textwidth}{@{\extracolsep{\fill}}llrrr@{}}
\toprule
Matrix & Method & IT & CPU (s) & RSE \\
\midrule
\multicolumn{5}{@{}l}{\texttt{nos5} \quad $468\times39\times12$,\; $\rho=2.36\%$} \\[1pt]
  & TREK            &  300 & 1.249 & $9.288\times10^{-2}$ \\
  & TREBK           &  138 & 4.361 & $9.888\times10^{-7}$ \\
  & TREGBK          &  106 & 3.401 & $9.559\times10^{-7}$ \\
  & \textbf{TGDBEK} &  \textbf{44} & \textbf{2.377} & $\mathbf{9.659\times10^{-7}}$ \\
\midrule
\multicolumn{5}{@{}l}{\texttt{ash85} \quad $85\times17\times5$,\; $\rho=7.24\%$} \\[1pt]
  & TREK            &  300 & 1.123 & $1.386\times10^{-1}$ \\
  & TREBK           &  171 & 2.315 & $9.463\times10^{-7}$ \\
  & TREGBK          &  300 & 4.117 & $1.028\times10^{-6}$ \\
  & \textbf{TGDBEK} & \textbf{128} & \textbf{2.395} & $\mathbf{9.716\times10^{-7}}$ \\
\midrule
\multicolumn{5}{@{}l}{\texttt{Cities} \quad $55\times23\times2$,\; $\rho=53.04\%$} \\[1pt]
  & TREK            & 6000 & 20.947 & $3.807\times10^{-3}$ \\
  & TREBK           & 1916 & 13.347 & $9.815\times10^{-7}$ \\
  & TREGBK          & 5740 & 45.566 & $9.881\times10^{-7}$ \\
  & \textbf{TGDBEK} & \textbf{1107} & \textbf{15.627} & $\mathbf{9.783\times10^{-7}}$ \\
\midrule
\multicolumn{5}{@{}l}{\texttt{WorldCities} \quad $315\times20\times5$,\; $\rho=23.87\%$} \\[1pt]
  & TREK            & 6000 & 21.676 & $\mathrm{NaN}$ \\
  & TREBK           & 1136 & 15.599 & $9.888\times10^{-7}$ \\
  & TREGBK          &  623 &  9.274 & $9.875\times10^{-7}$ \\
  & \textbf{TGDBEK} & \textbf{262} & \textbf{5.531} & $\mathbf{9.970\times10^{-7}}$ \\
\midrule
\multicolumn{5}{@{}l}{\texttt{gre\_216a} \quad $216\times24\times9$,\; $\rho=1.88\%$} \\[1pt]
  & TREK            &  879 & 3.148 & $9.893\times10^{-7}$ \\
  & TREBK           &  100 & 2.316 & $8.839\times10^{-7}$ \\
  & TREGBK          &   72 & 1.846 & $9.749\times10^{-7}$ \\
  & \textbf{TGDBEK} &  \textbf{61} & \textbf{2.176} & $\mathbf{9.192\times10^{-7}}$ \\
\bottomrule
\end{tabular*}
\end{table}
\begin{figure}[ht]
\centering
\includegraphics[width=\linewidth]{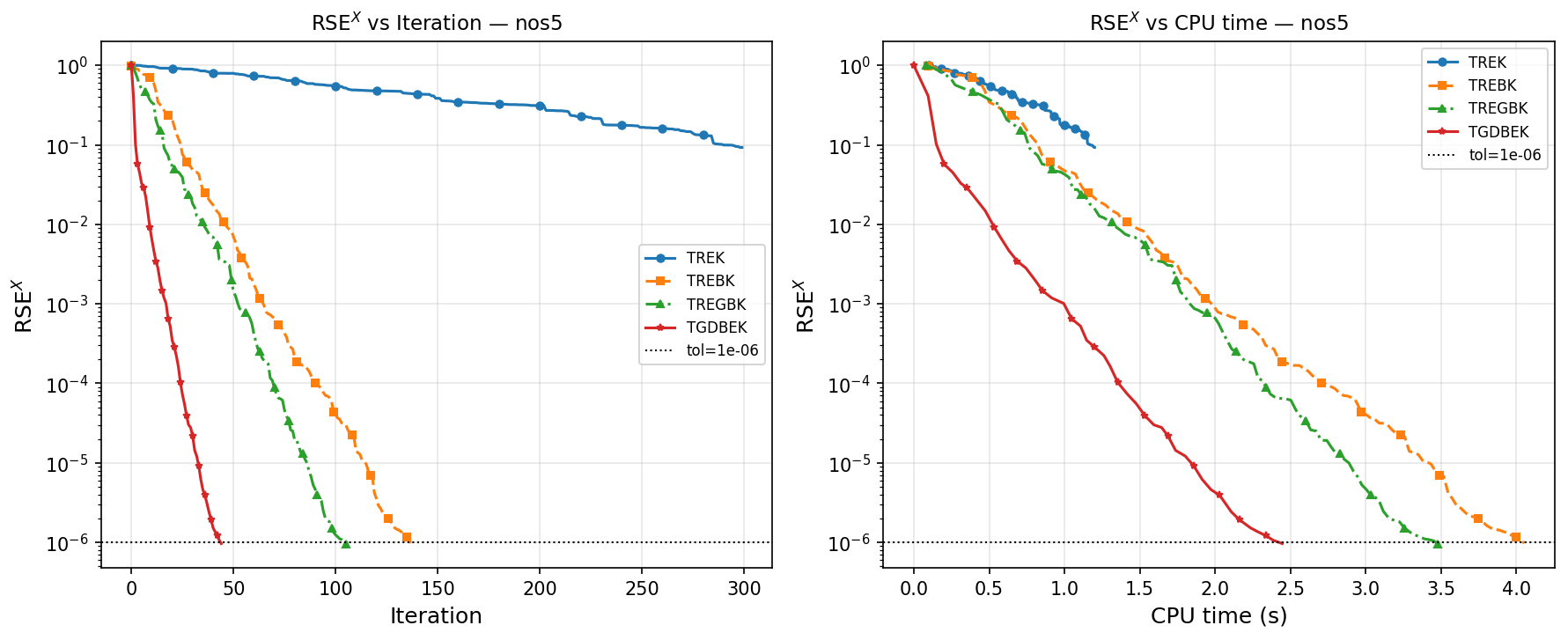}
\caption{$\mathrm{RSE}$ versus iteration (left) and $\mathrm{RSE}$ versus wall-clock CPU time (right) for the \texttt{nos5} sparse tensor system ($468\times39\times12$, $a=10^{-2}$, $T=300$). TGDBEK converges in 44 iterations and reaches the tolerance first in CPU time; TREK exhausts the budget. Dotted line: tolerance $\mathrm{tol}=10^{-6}$.}\label{fig:ex2-nos5}
\end{figure}
\end{example}

\FloatBarrier
\begin{example}[\textbf{Color image deblurring}]\label{ex:color-image}
In this example, we use the TREK, TREBK, TREGBK, and TGDBEK methods to solve the tensor linear system~\eqref{eq:eq1} arising from color image restoration. The test image is a public-domain flower photograph, resized and stored as a tensor $\fm X_\ast \in \mathbb{R}^{200\times200\times3}$. The deblurring operator $\fm A \in \mathbb{R}^{200\times200\times3}$ is constructed by setting the first frontal slice $\fm A_{:,:,1}$ to the normalized Gaussian Toeplitz blur matrix with $\sigma=4$ and bandwidth $\mathrm{band}=32$, and all remaining frontal slices to zero.  In fact, we let
\begin{align*}
  z_j = \exp\!\left(-\frac{j^2}{2\sigma^2}\right), \quad j = 0, \ldots, \mathrm{band}-1,
  \qquad
  \fm A_{:,:,1} = \frac{1}{\sigma\pi}\,D^{-1}T(z), \quad D = \mathrm{diag}(T(z)\,\mathbf{1}),
\end{align*}
where $T(z)$ is the symmetric Toeplitz matrix generated by $z$, $\mathbf{1}$ is the all-ones vector, and $D^{-1}T(z)$ denotes row normalization, with parameters $\sigma=4$ and $\mathrm{band}=32$.  The data tensor is obtained by $\fm B = \fm A * \fm X_\ast + \fm\epsilon$, where $\fm\epsilon$ is generated as described above with noise level $a=10^{-2}$, and the termination criterion is the same as Example~\ref{ex:dense}. The maximum number of iterations is $1000$.

The final experimental results are given in Table~\ref{tab:ex3}. The original image, the blurred and noisy image, and the images recovered by each method are shown in Fig.~\ref{fig:ex3-summary}. The $\mathrm{RSE}$ and $\mathrm{RSE}^Z$ versus iteration are shown in Fig.~\ref{fig:ex3-(g)}.
\end{example}

\begin{figure}[!ht]
\centering
\begin{subfigure}[t]{0.30\linewidth}
  \includegraphics[width=\linewidth]{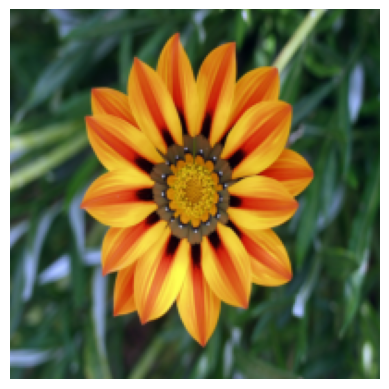}
  \caption{True image}\label{fig:ex3-(a)}
\end{subfigure}\hfill
\begin{subfigure}[t]{0.30\linewidth}
  \includegraphics[width=\linewidth]{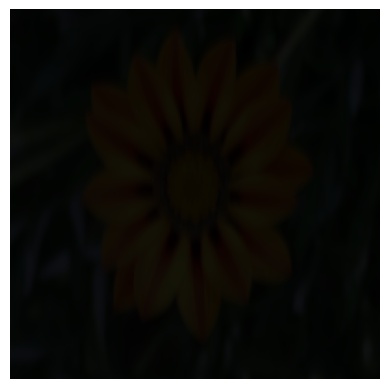}
  \caption{Blurred + noisy}\label{fig:ex3-(b)}
\end{subfigure}\hfill
\begin{subfigure}[t]{0.30\linewidth}
  \includegraphics[width=\linewidth]{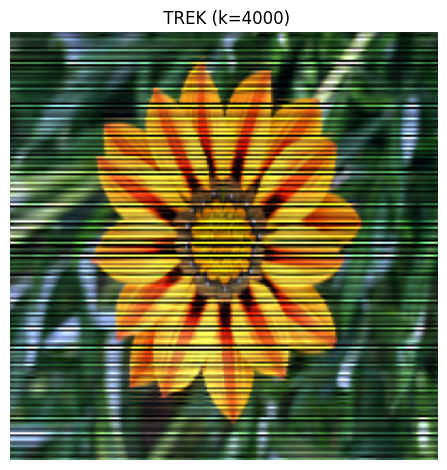}
  \caption{TREK}\label{fig:ex3-(c)}
\end{subfigure}
\\[0.6em]
\begin{subfigure}[t]{0.30\linewidth}
  \includegraphics[width=\linewidth]{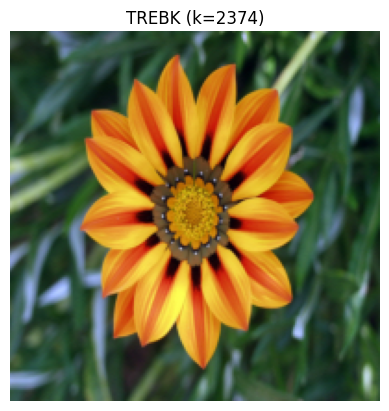}
  \caption{TREBK}\label{fig:ex3-(d)}
\end{subfigure}\hfill
\begin{subfigure}[t]{0.30\linewidth}
  \includegraphics[width=\linewidth]{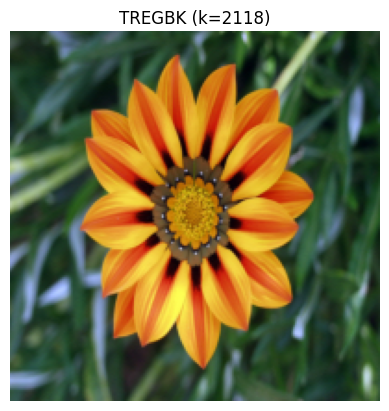}
  \caption{TREGBK}\label{fig:ex3-(e)}
\end{subfigure}\hfill
\begin{subfigure}[t]{0.30\linewidth}
  \includegraphics[width=\linewidth]{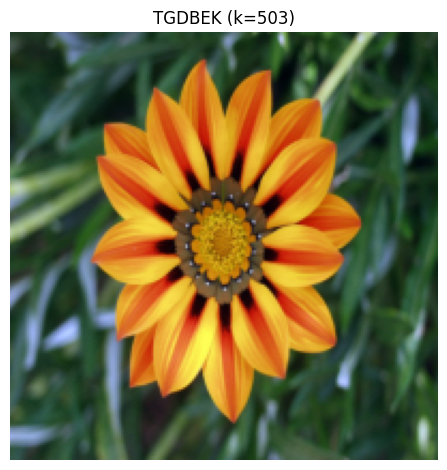}
  \caption{TGDBEK}\label{fig:ex3-(f)}
\end{subfigure}
\caption{Color ``flower'' image restoration: (a) true image, (b) blurred and noisy, (c)--(f) recovered images by TREK, TREBK, TREGBK, and TGDBEK.}\label{fig:ex3-summary}
\end{figure}

\begin{figure}[!ht]
\centering
\includegraphics[width=0.75\linewidth]{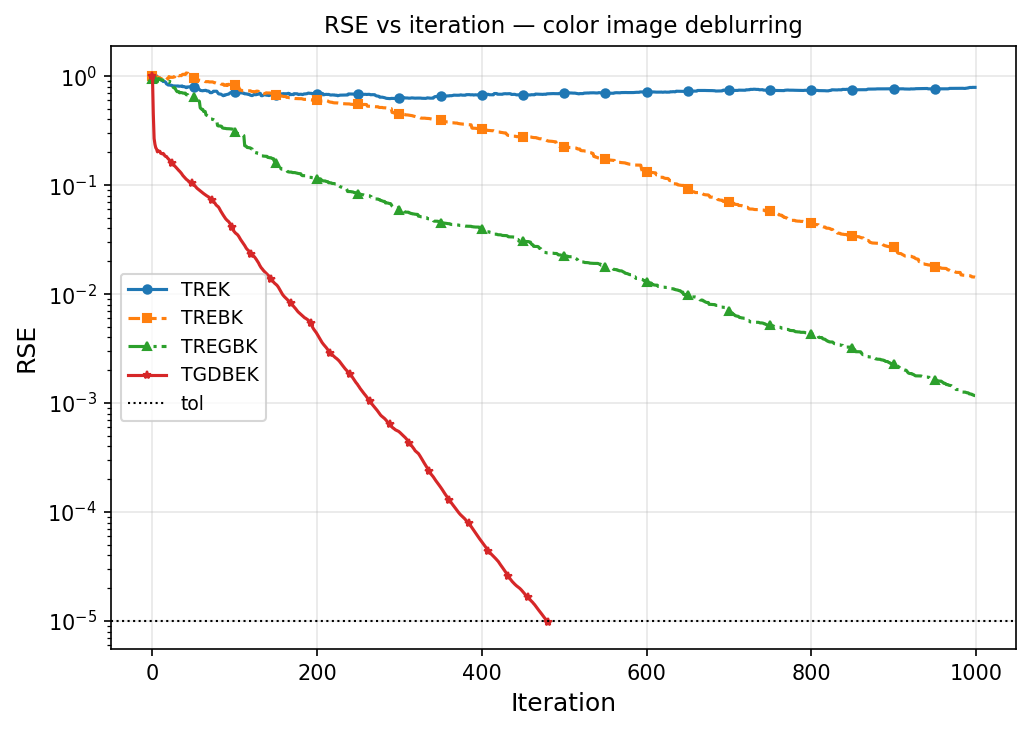}
\caption{$\mathrm{RSE}$ versus iteration for color image deblurring ($200\times200\times3$, $\sigma=4$, $a=10^{-2}$, max $1000$ iterations). TGDBEK is the only method to reach tolerance $10^{-5}$, converging at iteration $503$.}\label{fig:ex3-(g)}
\end{figure}

\begin{table}[!ht]
\centering\small
\caption{CPU, IT, RSEs, for color image deblurring ($200\times200\times3$, $\sigma=4$, $a=10^{-2}$, max $1000$ iterations). The last column gives the RSE for the $\fm Z$ iterate.}\label{tab:ex3}
\begin{tabular}{lrrrr}
\toprule
Method & CPU (s) & IT & RSE & RSE \\
\midrule
TREK   &  2.088 & 1000 & $7.92\times10^{-1}$ & --- \\
TREBK  &  8.822 & 1000 & $2.18\times10^{-2}$ & --- \\
TREGBK &  8.216 & 1000 & $1.27\times10^{-3}$ & --- \\
TGDBEK &  6.006 &  503 & $1.00\times10^{-5}$ & $8.745\times10^{-10}$ \\
\bottomrule
\end{tabular}
\end{table}

As can be seen from Table~\ref{tab:ex3}, TREK and TREBK exhaust the $1000$-iteration budget without reaching the prescribed tolerance, stopping at $\mathrm{RSE}^X = 7.92\times10^{-1}$ and $2.18\times10^{-2}$, respectively. TREGBK also fails to converge, reaching only $\mathrm{RSE}^X = 1.27\times10^{-3}$. TGDBEK is the only method to reach $\mathrm{RSE}^X < 10^{-5}$, converging at iteration $503$ in $6.01$, a considerable reduction in wall-clock time compared to TREGBK ($8.22\,\mathrm{s}$). Moreover, from Fig.~\ref{fig:ex3-summary}, the image recovered by TGDBEK is slightly sharper than those of all competing methods. Fig.~\ref{fig:ex3-(g)} further confirms TGDBEK's superior convergence rate across all methods.

\FloatBarrier

\begin{example}[\textbf{Gray image deblurring}]\label{ex:gray}
In this example, we apply the TREK, TREBK, TREGBK, and TGDBEK methods to solve the tensor linear system~\eqref{eq:eq1} arising from gray MRI-like image deblurring. The test image is a Shepp--Logan phantom~\cite{shepp1974fourier} resized to $128\times128$ pixels. A volumetric dataset $\fm X_\ast \in \mathbb{R}^{128\times128\times27}$ is constructed by generating $27$ frontal slices, each obtained by rotating the base phantom by an angle uniformly spaced in $[-8^\circ, 8^\circ]$ and applying a mild sinusoidal intensity scaling across slices. The deblurring operator $\fm A \in \mathbb{R}^{128\times128\times27}$ is constructed as follows. We let
\begin{align*}
  z_j = \exp\!\left(-\frac{j^2}{2\sigma^2}\right), \quad j = 0, \ldots, \mathrm{band}-1,
  \qquad
  \fm A_{:,:,1} = \frac{1}{\sigma\pi}\,D^{-1}T(z), \quad D = \mathrm{diag}(T(z)\,\mathbf{1}),
\end{align*}
where $T(z)$ is the symmetric Toeplitz matrix generated by $z$, $\mathbf{1}$ is the all-ones vector, and $D^{-1}T(z)$ denotes row normalization, with parameters $\sigma=4$ and $\mathrm{band}=64$. All remaining frontal slices are set to zero. The data tensor is obtained by $\fm B = \fm A * \fm X_\ast + \fm\epsilon$, where $\fm\epsilon$ is generated as described above with noise level $a=10^{-3}$, and the termination criterion is the same as Example~\ref{ex:dense}. The maximum number of iterations is $1000$.

The final experimental results are given in Table~\ref{tab:ex5}. The blurred image and the images recovered by each method (central frontal slice) are shown in Fig.~\ref{fig:gray-summary}.
\end{example}
\begin{table}[!ht]
\caption{IT, CPU, PSNR, and SSIM for gray MRI-like image deblurring ($128\times128\times27$, $\sigma=4$, $a=10^{-3}$, max $1000$ iterations). PSNR and SSIM are computed on the central frontal slice.}\label{tab:ex5}
\centering\small\setlength{\tabcolsep}{5pt}
\begin{tabular}{lrrrr}
\toprule
Method & IT & CPU (s) & PSNR (dB) & SSIM \\
\midrule
TREK            & 1000 & 1.523 & 18.14 & 0.602 \\
TREBK           & 1000 & 3.041 & 57.74 & 0.998 \\
TREGBK          & 1000 & 2.654 & 57.99 & 0.998 \\
\textbf{TGDBEK} & \textbf{1000} & \textbf{2.013} & \textbf{57.99} & \textbf{0.998} \\
\bottomrule
\end{tabular}
\end{table}

\begin{figure}[!ht]
\centering
\begin{subfigure}[t]{0.30\linewidth}
  \includegraphics[width=\linewidth]{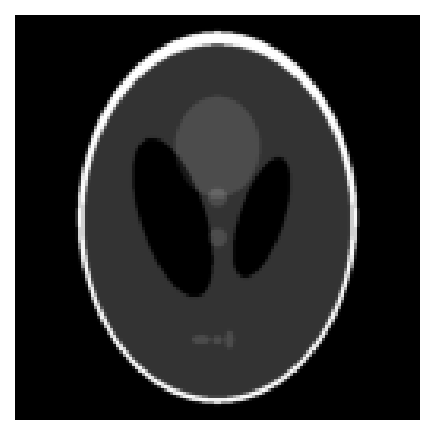}
  \caption{True image}\label{fig:gray-original}
\end{subfigure}\hfill
\begin{subfigure}[t]{0.30\linewidth}
  \includegraphics[width=\linewidth]{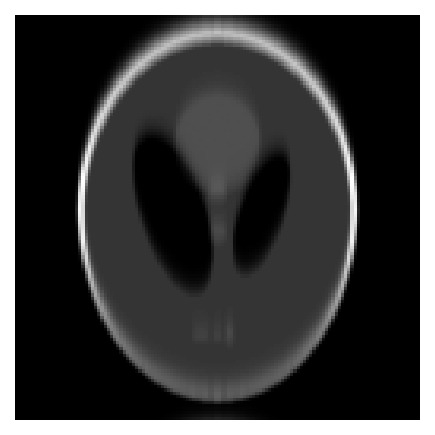}
  \caption{Blurred + noisy}\label{fig:gray-blur}
\end{subfigure}\hfill
\begin{subfigure}[t]{0.30\linewidth}
  \includegraphics[width=\linewidth]{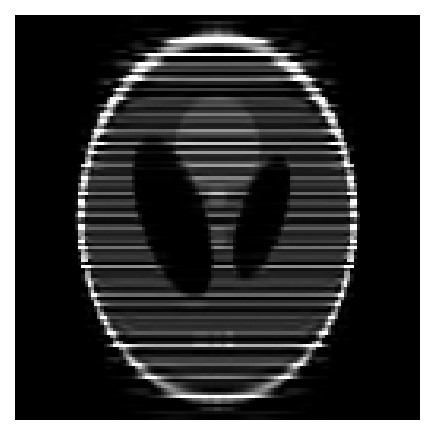}
  \caption{TREK}
\end{subfigure}
\\[0.6em]
\begin{subfigure}[t]{0.30\linewidth}
  \includegraphics[width=\linewidth]{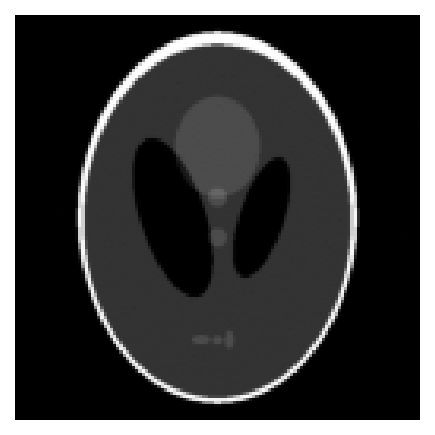}
  \caption{TREBK}
\end{subfigure}\hfill
\begin{subfigure}[t]{0.30\linewidth}
  \includegraphics[width=\linewidth]{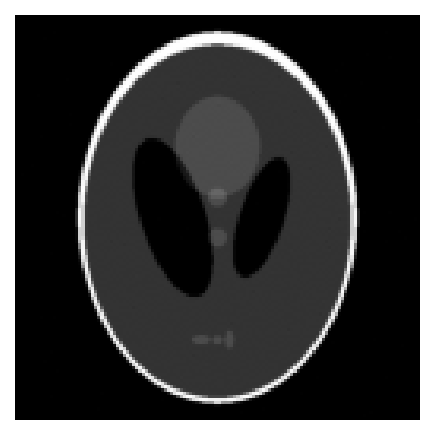}
  \caption{TREGBK }
\end{subfigure}\hfill
\begin{subfigure}[t]{0.30\linewidth}
  \includegraphics[width=\linewidth]{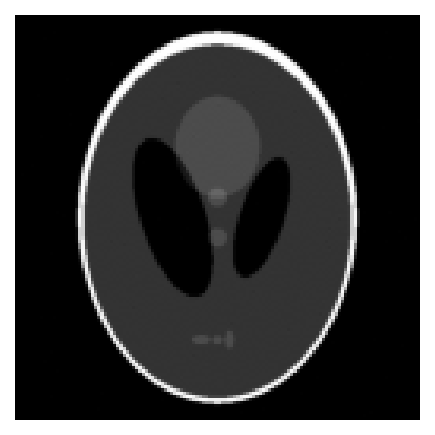}
\caption{TGDBEK }
\end{subfigure}
\caption{Gray MRI-like image deblurring ($128\times128\times27$, central frontal slice): (a) true image, (b) blurred and noisy, (c)--(f) recovered images by TREK, TREBK, TREGBK, and TGDBEK. TREK diverges; TREBK, TREGBK, and TGDBEK achieve near-identical high-fidelity recovery.}\label{fig:gray-summary}
\end{figure}

As can be seen from Table~\ref{tab:ex5} and Fig.~\ref{fig:gray-summary},  all the algorithms exhaust the given total number of iterations, with TREK  being the first to stop  while it didn't  achieve the prescribe tolerance. TREK recoves poorly the original image with $\mathrm{PSNR} = 18.14\,\mathrm{dB}$, $\mathrm{SSIM} = 0.602$. TREBK, TREGBK, and TGDBEK all recover the phantom faithfully, achieving $\mathrm{PSNR} \approx 58\,\mathrm{dB}$ and $\mathrm{SSIM} \approx 0.998$.  Notably, TGDBEK  matches the reconstruction quality of TREGBK  while achieving faster the tolerance in fewer seconds than TREGBK. That demonstrates how the greedy double-block strategy yields computational savings even when all methods exhaust the iteration budget.

\FloatBarrier
\begin{example}[\textbf{Effect of the threshold $\eta$}]\label{ex:eta}
In this example, we examine how the greedy threshold $\eta$ affects the convergence speed and computational cost of TGDBEK. Recall that $\eta \in (0,1]$ controls the size of the active-sets: a row block $i$ (resp.\ column block $j$) is included in the active set if and only if its squared residual contribution is at least $\eta$ times the maximum over all blocks. A small $\eta$ admits many blocks per step (akin to a full block update), whereas $\eta = 1$ retains only the single most informative block at each iteration (akin to TREK).

Here, the test system has coefficient tensor $\fm A \in \mathbb{R}^{500\times20\times10}$ with a randomly generated exact solution $\fm X_\ast \in \mathbb{R}^{20\times10\times10}$ and noise level $a = 10^{-2}$. The experiment is run for each value $\eta \in \{0.1, 0.2, \ldots, 1.0\}$, with a maximum of $2000$ iterations and tolerance $10^{-6}$, and results are averaged over $20$ independent trials with standard Gaussian entries.

The average iteration count IT and CPU time as functions of $\eta$ are reported in Table~\ref{tab:eta} and Fig.~\ref{fig:eta}.

\begin{table}[!ht]
\caption{Average IT and CPU of TGDBEK vs.\ greedy threshold $\eta$ ($\fm A\in\mathbb{R}^{500\times20\times10}$, tol $=10^{-6}$, 20 runs) at two noise levels.}\label{tab:eta}
\centering\small\setlength{\tabcolsep}{8pt}
\begin{tabular}{crr crr}
\toprule
& \multicolumn{2}{c}{$a=10^{-2}$} & & \multicolumn{2}{c}{$a=10^{-1}$} \\
\cmidrule{2-3}\cmidrule{5-6}
$\eta$ & IT & CPU (s) & & IT & CPU (s) \\
\midrule
$0.1$ &   1 & 0.031 & &   1 & 0.031 \\
$0.2$ &   1 & 0.027 & &   1 & 0.027 \\
$0.3$ &   1 & 0.027 & &   1 & 0.027 \\
$0.4$ &   1 & 0.029 & &   1 & 0.029 \\
$0.5$ &   6 & 0.264 & &   6 & 0.273 \\
$0.6$ &  18 & 0.674 & &  18 & 0.650 \\
$0.7$ &  37 & 1.273 & &  35 & 1.184 \\
$0.8$ &  57 & 1.796 & &  58 & 1.716 \\
$0.9$ &  88 & 2.441 & &  88 & 2.343 \\
$1.0$ & 362 & 7.971 & & 362 & 7.898 \\
\bottomrule
\end{tabular}
\end{table}

We can see that from Table~\ref{tab:eta} and Fig.~\ref{fig:eta}, the behavior of TGDBEK is remarkably consistent across both noise levels. For $\eta \leq 0.4$, TGDBEK converges in a single iteration at both noise levels. At $\eta = 0.5$ the IT rises to $6$, marking the onset of the transition regime. As $\eta$ increases beyond $0.5$, the active set shrinks and both IT and CPU time grow substantially. At $\eta = 1.0$, the algorithm degenerates to a TREK-like single-block update requiring $362$ iterations — approximately $20\times$ more than at $\eta = 0.6$. The recommended range is $\eta \in [0.1, 0.6]$, where convergence speed and per-iteration cost are jointly minimized, and this recommendation holds regardless of noise level.
\end{example}

\begin{figure}[!htbp]
\centering
\begin{subfigure}[t]{0.90\linewidth}
  \centering
  \includegraphics[width=\linewidth]{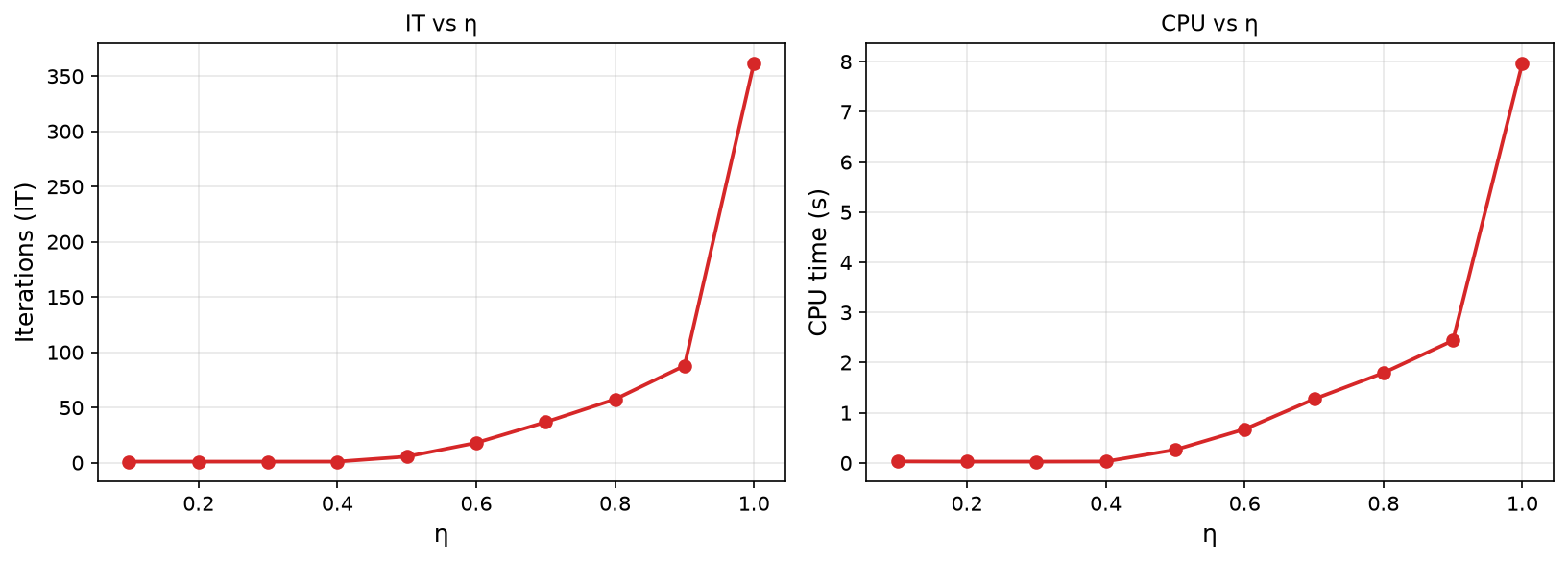}
  \caption{Average IT (left) and CPU time (right) vs.\ $\eta$.}\label{fig:eta-conv}
\end{subfigure}
\\[0.6em]
\begin{subfigure}[t]{0.48\linewidth}
  \includegraphics[width=\linewidth]{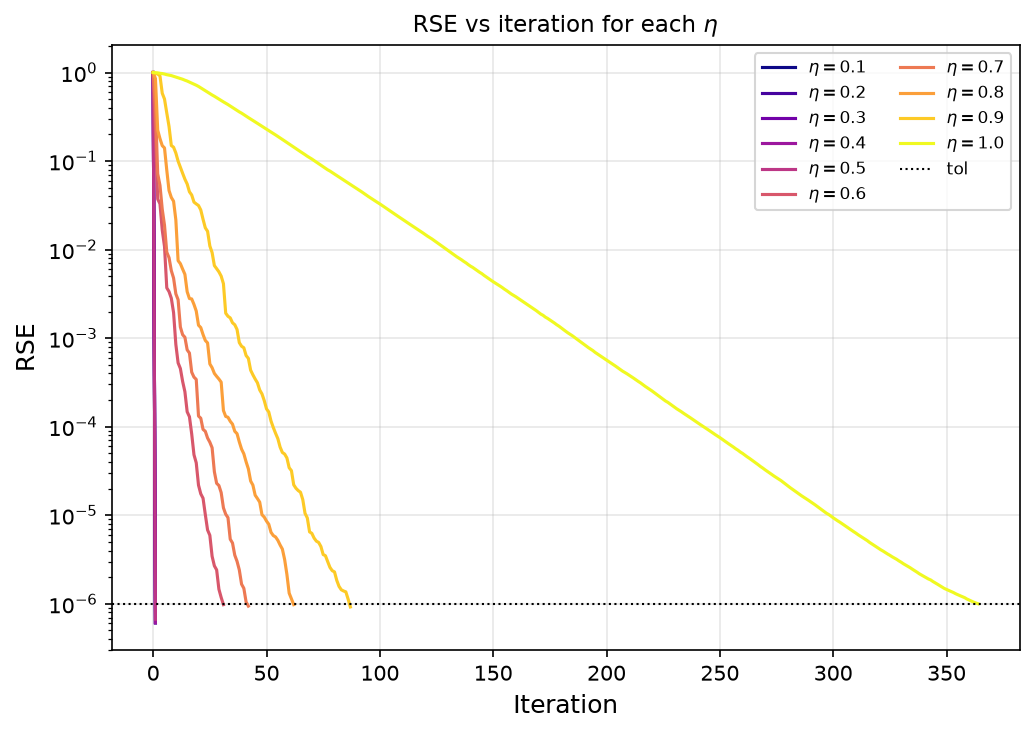}
  \caption{RSE vs.\ iteration, $a=10^{-2}$.}\label{fig:eta-rse-low}
\end{subfigure}\hfill
\begin{subfigure}[t]{0.48\linewidth}
  \includegraphics[width=\linewidth]{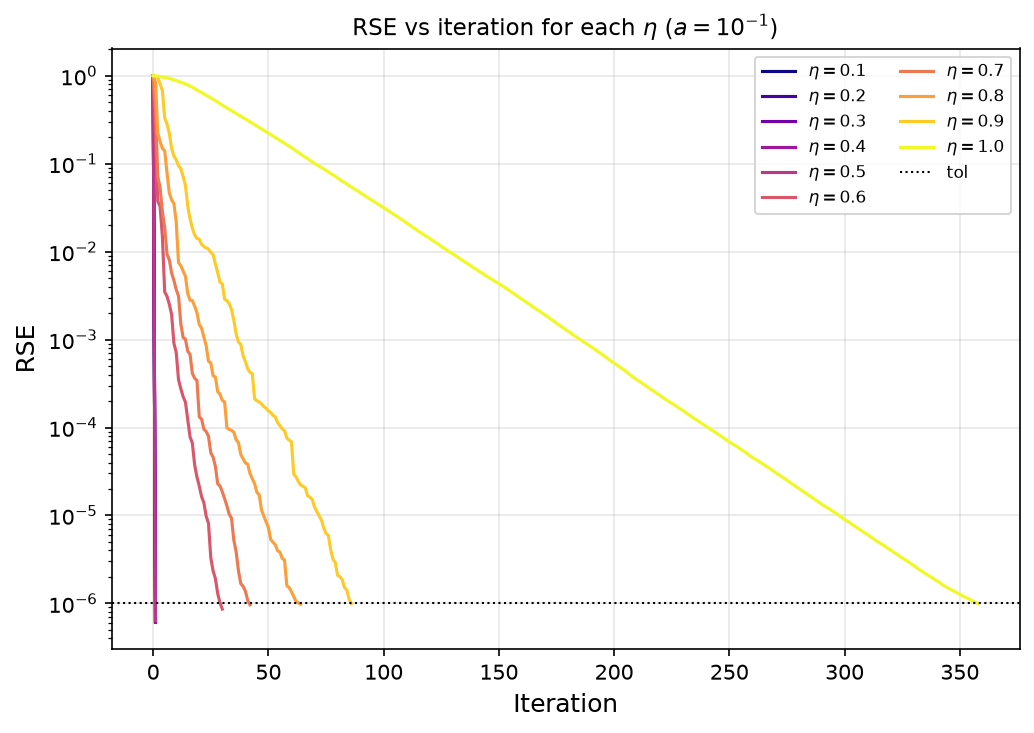}
  \caption{RSE vs.\ iteration, $a=10^{-1}$.}\label{fig:eta-rse-high}
\end{subfigure}
\caption{Effect of the greedy threshold $\eta$ on TGDBEK. Top: average IT and CPU vs.\ $\eta$ ($a=10^{-2}$, 20 runs). Bottom: RSE vs.\ iteration for each $\eta\in\{0.1,\ldots,1.0\}$ at two noise levels ($\fm A\in\mathbb{R}^{500\times20\times10}$).}\label{fig:eta}
\end{figure}

As can be seen from Table~\ref{tab:eta} and Fig.~\ref{fig:eta}, the sharp transition between $\eta=0.4$ (IT$=1$) and $\eta=0.5$ (IT$=6$) reflects a qualitative change in block size. The practical recommendation is to keep $\eta\in[0.1,0.5]$ when iteration count is the priority, or $\eta\approx0.5$--$0.6$ when per-step cost matters.

\section{Conclusion}\label{sec:concl}


In this paper, we proposed the Tensor Greedy Double Block Extended Kaczmarz (TGDBEK) method for solving large, inconsistent tensor linear systems under the t-product algebraic framework. By dynamically selecting active sets of horizontal and lateral slices with maximal residual energy at each iteration, TGDBEK eliminates the need for predefined, rigid block partitions used in existing methods such as TREBK and TREGBK. We established a theoretical convergence analysis proving that TGDBEK converges linearly in the Frobenius norm to the unique minimum-norm least-squares solution $\mathcal{A}^\dagger * \mathcal{B}$. 

Extensive numerical experiments across synthetic dense overdetermined tensor systems, real-world sparse tensors from the SuiteSparse collection, multichannel color image deblurring, and 3D volumetric MRI phantom reconstruction demonstrate the superiority of TGDBEK. Across all benchmarks, TGDBEK achieved up to a $5\times$ reduction in iteration count and up to a $2.5\times$ reduction in wall-clock CPU time compared to existing state-of-the-art tensor Kaczmarz solvers. 

Several promising research directions arise from this work:
\begin{enumerate}
    \item \textbf{Projection-Free (Pseudoinverse-Free) Variants by Averaging\cite{necoara2019faste,tondji2023faster}:} In each step of TGDBEK, computing the Moore--Penrose pseudoinverse $(\mathcal{A}_{J_k,:,:})^\dagger$ of active sub-tensors can become computationally demanding for very large block sizes. As a natural next step, we will develop \emph{projection-free greedy extended block Kaczmarz methods} leveraging an \emph{averaging strategy} (in the spirit of tensor randomized extended average block Kaczmarz). By replacing tensor pseudoinverses with weighted parallel slice updates and averaging, the per-iteration computational complexity can be drastically curtailed while preserving fast convergence.
    \item \textbf{Adaptive and Dynamic Thresholding:} Investigating dynamic threshold sequences $\eta_k$ that adaptively shrink or expand active block sizes based on the decay of the residual spectrum.
    \item \textbf{High-Order Generalization:} Extending the TGDBEK and greedy averaging frameworks to arbitrary $p$-th order tensors ($p \ge 4$) under the generalized tuple t-product.
\end{enumerate}

\end{document}